\theoremstyle{plain}
\newtheorem{Thm}{Theorem}[section]
\newtheorem{Lem}[Thm]{Lemma}
\newtheorem{Prop}[Thm]{Proposition}
\newtheorem{Cor}[Thm]{Corollary}
\theoremstyle{definition}
\newcommand{\bR}{\ensuremath{\mathbb{R}}}
\newcommand{\cF}{\ensuremath{\mathcal{F}}}
\newcommand{\cP}{\ensuremath{\mathcal{P}}}
\newcommand{\eps}{\ensuremath{\varepsilon}}
\renewcommand{\d}{{\rm d}}
\newcommand{\law}{\stackrel{{\rm law}}{=}}
\newcommand{\abra}[1]{\left| #1 \right|}
\newcommand{\kbra}[1]{\left\{ #1 \right\}}
\newcommand{\ebra}[1]{\left[ #1 \right]}
\numberwithin{equation}{section}
\newcounter{No}
\newcounter{Ci}[subsection]
\renewcommand{\theCi}{\arabic{Ci}$^{\circ}$}
\newcommand{\Circ}{\noindent \refstepcounter{Ci} {\bf \theCi).} }
\renewcommand\section{\@startsection {section}{1}{\z@}%
                                   {-3.5ex \@plus -1ex \@minus -.2ex}%
                                   {2.3ex \@plus.2ex}%
                                   {\normalfont\large\bf}}
\begin{document}

\begin{center}
{\Large \bf Remarks on the density of the law of the occupation time 
for Bessel bridges and stable excursions}
\end{center}
\begin{center}
Kouji \textsc{Yano}\footnote{
E-mail: {\tt kyano@cr.math.sci.osaka-u.ac.jp}, 
Department of Mathematics, Osaka University, Osaka, Japan. 
Supported by JSPS Research Fellowships for Young Scientists.}
\qquad and \qquad 
Yuko \textsc{Yano}\footnote{
E-mail: {\tt yyano@fc.ritsumei.ac.jp}, 
Research Organization of Social Sciences, Ritsumeikan University, Kusatsu Shiga, Japan.} 
\end{center}
\begin{center}
{\small \today}
\end{center}
\bigskip



\begin{abstract}
Smoothness and asymptotic behaviors are studied 
for the densities of the law of the occupation time on the positive line 
for Bessel bridges and the normalized excursion of strictly stable processes. 
The key role is played by these properties 
for functions defined by Riemann--Liouville fractional integrals. 
\end{abstract}

\section{Introduction}

For a standard one-dimensional Brownian motion $ X=(X(t)) $, 
the occupation time $ A_+ = \int_0^1 1_{\{ X(s) > 0 \}} \d s $ 
has a density $ f_+(x) = 1/ \{ \pi \sqrt{x(1-x)} \} $, 
which is well-known as {\em L\'evy's arc-sine law}. 
In his formula, we find out that the density $ f_+(x) $ 
is smooth in $ (0,1) $ 
and diverges at $ x = 0+ $ as $ f_+(x) \sim (1/\pi) x^{-1/2} $. 
This implies the paradoxical fact that 
$ A_+ $ is more likely to take values near the extreme values $ 0 $ and $ 1 $ 
than near the central value $ 1/2 $. 

There have been a lot of attempts to study the law of the occupation time $ A_+ $ 
for other processes $ X $. 
For Bessel processes of dimension $ 0<d<2 $, 
Barlow--Pitman--Yor \cite{BPY} have found out that 
the occupation time $ A_+ $ has the same law as that introduced by Lamperti \cite{L}, 
and hence we see that the law has a density which is smooth in the interior of its domain 
and diverges at the origin of order $ x^{-d/2} $. 

For a Brownian bridge, 
it is also well-known as L\'evy's theorem 
that the law of the occupation time is uniform. 
For Bessel bridges, 
the second author \cite{Y} has obtained 
an expression of the distribution function 
in terms of the Riemann--Liouville fractional integral. 
We encounter a similar situation 
in the case of the normalized excursion of a strictly stable process 
(we call it a {\em stable excursion} in short) 
whose law has been characterized by Fitzsimmons--Getoor \cite{FG}. 
These expressions are so implicit 
that it is worth discussing 
regularity and asymptotic behaviors of their densities. 

In the present paper, 
we study the law of the occupation time 
for Bessel bridges and for stable excursions. 
We show that 
the distribution functions are smooth in the interior of the domain 
and determine the asymptotic behaviors of their densities at the extreme values. 
For these purposes, 
we study these properties 
for functions which are expressed by the Riemann--Liouville fractional integrals. 
Although the arguments are rather elementary, 
we could not find the results in the literature, 
and so it is still useful to the readers to deal with them 
to the extent which we need for our purposes. 

According to Theorem \ref{thm1} which is one of our main theorems, 
the law of the occupation time $ A_+ $ for a Bessel bridge $ X $ of dimension $ 0<d<2 $ 
has a smooth density $ f_+(x) $ 
such that $ f_+(x) \sim C x^{1-d} $ as $ x \to 0+ $ for some constant $ C $, 
which generalizes L\'evy's theorem for a Brownian bridge to Bessel bridges. 
We remark that the density $ f_+(x) $ diverges at $ x=0+ $ when $ 1<d<2 $ 
while it vanishes there when $ 0<d<1 $. 

The present paper is organized as follows. 
In Section \ref{sec: main}, we give a brief summary of the preceding results 
about the law of the occupation time for several processes. 
We also state our main theorems there. 
In Section \ref{sec: frac}, we give the precise definition 
of the Riemann--Liouville fractional integrals 
for a certain class of functions. 
We prove smoothness of our distribution functions in the end of that section. 
Section \ref{sec: asymp} 
is devoted to the proof of asymptotic behaviors of our distribution functions.

\section{Backgrounds and Main theorems} \label{sec: main}

\setcounter{Ci}{0}
Throughout the present paper, 
we suppose that the process considered 
which we will denote by $ X=(X(t):0 \le t \le 1) $ 
takes values in $ \bR $ and starts from 0 
and we will denote its occupation time up to time $ 1 $ on the positive line by 
$ A_+ = A_+(1) = \int_0^1 1_{\{ X(s)>0 \}} \d s $. 
By a density of the law of $ A_+ $ 
we always mean one with respect to the Lebesgue measure 
and will denote it by $ f_+ $. 
For two functions $ f,g $ defined on $ (0,\eps) $ for some $ \eps>0 $ 
such that $ f $ and $ g $ does not vanish on $ (0,\eps) $, 
we say that $ f(x) \sim g(x) $ if $ \lim_{x \to 0+} f(x)/g(x) = 1 $. 

\

\Circ 
When $ X $ is a standard Brownian motion, 
L\'evy's arc-sine law (see, e.g., \cite[pp. 57]{IM}) asserts that 
$ P(A_+ \le x) = \frac{2}{\pi} \sin^{-1} \sqrt{x} $, $ 0 \le x \le 1 $, i.e., 
\begin{align}
f_+(x) = \frac{1}{\pi \sqrt{x(1-x)}} 
, \qquad 0<x<1. 
\label{}
\end{align}
Here we remark that {\em (i) the density $ f_+ $ is smooth in $ (0,1) $}, 
and {\em (ii) it has an asymptotic $ f_+(x) \sim \frac{1}{\pi} x^{-1/2} $ as $ x \to 0+ $}. 

When $ X $ is a skew Bessel processes 
of dimension $ 0<d<2 $ with skewness parameter $ 0<p<1 $, 
Barlow--Pitman--Yor \cite{BPY} (see also Watanabe \cite{W}) 
have found out that $ A_+ $ has the same law 
as a random variable $ Y_{\alpha ,p} $ 
for $ \alpha = 1-d/2 \in (0,1) $ 
whose law is characterized by the Stieltjes transform: 
\begin{align}
E \ebra{ \frac{1}{\lambda + Y_{\alpha ,p}} } 
= \frac{p(\lambda+1)^{\alpha -1} + (1-p) \lambda^{\alpha -1}}{p(\lambda+1)^{\alpha } + (1-p) \lambda^{\alpha }} 
, \qquad \lambda>0 . 
\label{}
\end{align}
The class $ Y_{\alpha ,p} $ of random variables 
was introduced by Lamperti \cite{L} 
as the possible limit distributions of the occupation time of random walks 
(see also Fujihara--Kawamura--the second author \cite{FKY}). 
By inverting the Stieltjes transform, 
we see that the law of $ Y_{\alpha ,p} $ 
has a density $ f_+=f_{\alpha ,p} $ which is given by 
\begin{align}
f_{\alpha ,p}(x) = \frac{\sin \alpha \pi}{\pi} 
\cdot \frac{p(1-p) x^{\alpha -1} (1-x)^{\alpha -1}}{p^2 (1-x)^{2\alpha } + (1-p)^2 x^{2\alpha } + 2p(1-p)x^{\alpha } (1-x)^{\alpha } \cos \alpha \pi} 
\label{}
\end{align}
for $ 0<x<1 $. 
Here we also remark that 
{\em (i) the density $ f_{\alpha ,p} $ is smooth in $ (0,1) $}, 
and {\em (ii) it has an asymptotic} 
\begin{align}
f_{\alpha ,p}(x) \sim \frac{\sin \alpha \pi}{\pi} \cdot \frac{1-p}{p} \cdot x^{\alpha -1} 
\qquad \text{\em as} \ x \to 0+ . 
\label{}
\end{align}

For general diffusion processes $ X $, 
Kasahara--the second author \cite{KY} have studied the asymptotic behavior 
of the distribution function of $ A_+ $ 
under a certain regular variation assumption on the speed measure at the origin. 
Motivated by the result, 
Watanabe--the authors \cite{WYY} have proved 
that the law of $ A_+ $ has a density $ f_+ $ in quite a general case 
and also proved that 
the density is continuous in $ (0,1) $ 
and has an asymptotic $ f_+(x) \sim f_{\alpha ,p}(x) $ as $ x \to 0+ $ 
for some $ \alpha ,p $ under the regular variation assumption. 

We also remark on a mysterious resemblance between the law of $ Y_{\alpha ,1/2} $ 
and that of $ J_{\alpha } $ for $ 0<\alpha <1 $ 
which is characterized by its Stieltjes transform as 
\begin{align}
E \ebra{ \frac{1}{\lambda + J_{\alpha }} } 
= \frac{\alpha }{1-\alpha } \cdot \frac{\lambda^{\alpha -1} - (1+\lambda)^{\alpha -1}}{(1+\lambda)^{\alpha } - \lambda^{\alpha }} 
, \qquad \lambda>0 
\label{}
\end{align}
and which has a density $ f_{J_{\alpha }} $ on $ (0,1) $ with respect to the Lebesgue measure 
given by 
\begin{align}
f_{J_{\alpha }}(x) = \frac{\sin \alpha \pi}{\pi} \cdot \frac{\alpha }{1-\alpha } \cdot 
\frac{x^{\alpha -1} (1-x)^{\alpha -1}}{x^{2\alpha } + (1-x)^{2\alpha } - 2 x^{\alpha } (1-x)^{\alpha } \cos \alpha \pi} 
\label{}
\end{align}
for $ 0<x<1 $. 
The class of random variables $ J_{\alpha } $ 
has been introduced by Bertoin--Fujita--Roynette--Yor \cite[Theorem 1.1]{BFRY} 
to characterize the L\'evy measure of 
the law of the duration of the excursion straddling an independent standard exponential time 
for a Bessel process of dimension $ d=2-2\alpha $. 
They also introduced a two-parameter family of random variables $ J_{\alpha ,\beta} $ 
such that $ J_{\alpha ,\alpha } \law J_{\alpha } $ 
and $ J_{\alpha ,1-\alpha } \law Z_{\alpha } $ where $ Z_{\alpha } $ will be defined below.

\

\Circ 
In what follows, by a bridge process we mean 
the process obtained 
from a process $ Y $ for which all points are regular 
by conditioning on $ Y(1)=0 $. 
We refer to \cite{FPY} for the precise definition. 

When $ X $ is the bridge process of a Brownian motion (or a Brownian bridge), 
L\'evy's theorem (see, e.g., \cite[pp. 58]{IM}) asserts that 
the law of $ A_+ $ is uniform on $ (0,1) $. 
We point out that the law of $ A_+ $ has a constant density, 
which is a completely different situation from that of a Brownian motion. 
This result has been generalized to 
the bridge process of a L\'evy process (or a L\'evy bridge) 
for which all points are regular, 
by Fitzsimmons--Getoor \cite{FG} and Knight \cite{Kn} independently 
who showed that the law of $ A_+ $ is uniform on $ (0,1) $. 

Contrary to the case of L\'evy bridges, 
we encounter a drastically different situation in the case of diffusion bridges. 
When $ X $ is the bridge process of a skew Bessel process (or a skew Bessel bridge) 
of dimension $ 0<d<2 $ with skewness parameter $ 0<p<1 $, 
the second author \cite[Theorem 3.1]{Y} has proved that 
the distribution function of $ A_+ $ 
coincides with $ G_{1-2/d,p}(x) $ 
where $ G_{\alpha ,p}(x) $ for $ 0<\alpha <1 $ is a function on $ (0,1) $ 
characterized by its {\em generalized Stieltjes transform of index $ \alpha $} as 
\begin{align}
\int_0^1 \frac{\d G_{\alpha ,p}(x)}{(\lambda + x)^{\alpha }} 
= \frac{1}{p(1+\lambda)^{\alpha } + (1-p) \lambda^{\alpha }} 
, \qquad \lambda>0 . 
\label{GST of G alpha p}
\end{align}
Inverting the transform in the formula \eqref{GST of G alpha p}, 
she obtained the following expression of $ G_{\alpha ,p} $ 
in terms of the Riemann--Liouville fractional integral (see \cite[Theorem 4.1]{Y}): 
\begin{align}
G_{\alpha ,p}(x) = \int_0^x (x-t)^{\alpha -1} g_{\alpha ,p}(t) \d t 
, \qquad 0 \le x \le 1 
\label{G alpha p}
\end{align}
where 
\begin{align}
g_{\alpha ,p}(t) = \frac{\sin \alpha \pi}{\pi} 
\cdot \frac{(1-p) t^{\alpha }}{p^2 (1-t)^{2\alpha } + (1-p)^2 t^{2\alpha } 
+ 2p(1-p)t^{\alpha }(1-t)^{\alpha } \cos \alpha \pi} 
\label{g alpha p}
\end{align}
for $ 0<t<1 $. 
From \eqref{G alpha p} and Lemma \ref{lem: I alpha prime}, 
it follows that the density $ f_+ $ is given by 
\begin{align}
G_{\alpha ,p}'(x) = \int_0^x (x-t)^{\alpha -1} g_{\alpha ,p}'(t) \d t 
, \qquad 0 < x < 1 . 
\label{G alpha p prime}
\end{align}
We point out here that 
the integral $ \int_0^x (x-t)^{\alpha -1} g_{\alpha ,p}''(t) \d t $ is meaningless 
because of the asymptotic $ g_{\alpha ,p}''(t) \sim C_1 t^{\alpha -2} $ as $ t \to 0+ $ 
for some constant $ C_1 $. 
Nevertheless, the first one of our main theorems is 
\begin{Thm} \label{thm1}
The distribution function $ G_{\alpha ,p}(x) $ is infinitely differentiable in $ (0,1) $ 
and its derivative has an asymptotic 
\begin{align}
G_{\alpha ,p}'(x) \sim \frac{\sin \alpha \pi}{\pi} \cdot \frac{1-p}{p^2} \cdot 
\frac{\alpha \Gamma (\alpha )^2}{\Gamma (2 \alpha )} x^{2 \alpha -1} 
\qquad \text{as} \ x \to 0+ . 
\label{thm1-asymp}
\end{align}
\end{Thm}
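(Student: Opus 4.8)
The plan is to prove the two assertions separately; smoothness will come from a localization argument that sidesteps the non-integrability of $g_{\alpha,p}''$ noted above, and the asymptotic \eqref{thm1-asymp} from isolating the leading term of $g_{\alpha,p}$ at the origin before carrying out the fractional integration in \eqref{G alpha p prime}. For smoothness, first note that the denominator in \eqref{g alpha p} equals $\bigl|\,p(1-t)^\alpha+(1-p)t^\alpha e^{i\alpha\pi}\,\bigr|^2$, whose imaginary part $(1-p)t^\alpha\sin\alpha\pi$ is strictly positive on $(0,1)$, so $g_{\alpha,p}\in C^\infty((0,1))$. Fix $x_0\in(0,1)$ and a neighborhood $U\subset(0,1)$ of $x_0$ with $\delta:=\inf U>0$ and $\sigma:=\sup U<1$, and for $x\in U$ write
\[
G_{\alpha,p}(x)=\int_0^{\delta/2}(x-t)^{\alpha-1}g_{\alpha,p}(t)\,\d t+\int_{\delta/2}^x(x-t)^{\alpha-1}g_{\alpha,p}(t)\,\d t .
\]
On $U\times[0,\delta/2]$ one has $x-t\ge\delta/2$, so the kernel and all its $x$-derivatives are bounded there while $g_{\alpha,p}\in L^1(0,\delta/2)$; hence the first term is $C^\infty$ on $U$ by differentiation under the integral sign. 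In the second term the substitution $t=\delta/2+(x-\delta/2)s$ gives $(x-\delta/2)^\alpha\int_0^1(1-s)^{\alpha-1}g_{\alpha,p}(\delta/2+(x-\delta/2)s)\,\d s$, where the argument of $g_{\alpha,p}$ stays in the compact set $[\delta/2,\sigma]\subset(0,1)$ and $(1-s)^{\alpha-1}\in L^1(0,1)$, so this too is $C^\infty$ on $U$, as is the smooth prefactor $(x-\delta/2)^\alpha$. Hence $G_{\alpha,p}\in C^\infty(U)$, and since $x_0$ was arbitrary, $G_{\alpha,p}\in C^\infty((0,1))$; I would record this as the smoothness lemma announced for Section \ref{sec: frac}.

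For the asymptotic, write $D(t)$ for the denominator in \eqref{g alpha p}. Expanding $(1-t)^{2\alpha}$ and $(1-t)^\alpha$ about $t=0$ gives $D(0)=p^2$, $D(t)-p^2=O(t^\alpha)$ and $D'(t)=O(t^{\alpha-1})$ as $t\to0+$. Put $a:=\frac{\sin\alpha\pi}{\pi}\cdot\frac{1-p}{p^2}>0$; then $g_{\alpha,p}(t)=a\,t^\alpha+r(t)$ with $r(t):=\frac{\sin\alpha\pi}{\pi}(1-p)\,t^\alpha(1/D(t)-1/p^2)$, and from $D(t)\to p^2\ne0$ together with the preceding estimates one gets $r(t)=O(t^{2\alpha})$ and $r'(t)=O(t^{2\alpha-1})$ as $t\to0+$. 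Since $\alpha>0$, both $a\alpha\,t^{\alpha-1}$ and $r'$ are integrable against $(x-t)^{\alpha-1}$ near $0$, so \eqref{G alpha p prime} splits as
\[
G_{\alpha,p}'(x)=a\alpha\int_0^x(x-t)^{\alpha-1}t^{\alpha-1}\,\d t+\int_0^x(x-t)^{\alpha-1}r'(t)\,\d t .
\]
The first integral equals $x^{2\alpha-1}\int_0^1 s^{\alpha-1}(1-s)^{\alpha-1}\,\d s=x^{2\alpha-1}\Gamma(\alpha)^2/\Gamma(2\alpha)$, so $a\alpha$ times it is exactly the right-hand side of \eqref{thm1-asymp} (a nonzero multiple of $x^{2\alpha-1}$). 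For the second, the bound $|r'(t)|\le Ct^{2\alpha-1}$ near $0$ gives, on splitting the integral at $x/2$, $\bigl|\int_0^x(x-t)^{\alpha-1}r'(t)\,\d t\bigr|\le C'x^{3\alpha-1}=o(x^{2\alpha-1})$. Adding the two contributions yields \eqref{thm1-asymp}.

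The Beta-integral evaluation and the dominated differentiation in the smoothness step are routine, and localization lets the $C^\infty$-claim go through in spite of the non-integrability singled out in the text. The one point needing genuine care is the remainder bound $r'(t)=O(t^{2\alpha-1})$: I must verify that differentiating the correction $t^\alpha(1/D(t)-1/p^2)$ yields nothing worse than $t^{2\alpha-1}$, which rests on $D(t)-p^2=O(t^\alpha)$, $D'(t)=O(t^{\alpha-1})$ and $D$ being bounded away from $0$ near the origin. This amounts to pinning down the subleading exponent of $g_{\alpha,p}$ at $0$ and matching it against the gain $t^\alpha$ supplied by the fractional integral, so that the error is only $O(x^{3\alpha-1})$ with $3\alpha-1>2\alpha-1$; once that is secured, the argument closes.
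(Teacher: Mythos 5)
Your argument is correct, but it reaches both halves of the theorem by a route different from the paper's. For smoothness, the paper does not localize: it develops a small operational calculus around $\delta f(x)=xf'(x)$, proving in Proposition \ref{prop: key1} that $\delta(I^{\alpha}[f])=I^{\alpha}[\alpha f+\delta f]$ whenever $\delta f\in L^1(0,1-)$, iterating this in Corollary \ref{cor: key1} to get the closed formula \eqref{Dn I alpha f} for all derivatives, and then checking in Lemma \ref{lemma} that $g_{\alpha,p}$ lies in a class $\cF$ stable under $\delta$; your decomposition of $\int_0^x$ at $\delta/2$ with differentiation under the integral sign away from the singularity proves the same $C^\infty$ statement more elementarily, at the cost of not producing any formula for the higher derivatives (which the paper also needs, e.g.\ to write \eqref{H gamma prime} in Theorem \ref{thm2}). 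For the asymptotics, the paper applies a Karamata-type statement (Proposition \ref{prop: key2}) for $I^{\alpha}$ acting on regularly varying inputs to $G_{\alpha,p}'=\Gamma(\alpha)I^{\alpha}[g_{\alpha,p}']$, whereas you expand $g_{\alpha,p}'(t)=a\alpha t^{\alpha-1}+r'(t)$ with $r'(t)=O(t^{2\alpha-1})$, evaluate the Beta integral exactly, and bound the remainder by $O(x^{3\alpha-1})$; this is more hands-on and in fact slightly stronger near the leading term (it gives a rate), while the paper's proposition is more flexible (it tolerates slowly varying factors and covers Theorem \ref{thm2} by the same one-line application). Note also that your computation $g_{\alpha,p}'(t)\sim\frac{\sin\alpha\pi}{\pi}\cdot\frac{1-p}{p^2}\cdot\alpha\,t^{\alpha-1}$ is the one consistent with \eqref{thm1-asymp}; the corresponding display in the paper's Section \ref{sec: asymp} shows $t^{\alpha}$ in place of $t^{\alpha-1}$, which appears to be a typographical slip, and your explicit remainder analysis confirms the exponent $2\alpha-1$ and the constant in the theorem. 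Your two minor appeals to routine facts (dominated differentiation on the localized pieces, and the $O(x^{3\alpha-1})$ bound, which also follows directly by scaling $t=xs$ without splitting at $x/2$) are easily justified, so the proposal stands as a complete alternative proof.
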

The proof of Theorem \ref{thm1} will be given in Sections \ref{sec: frac} and \ref{sec: asymp}. 
We remark that 
the asymptotic behavior \eqref{thm1-asymp} of the density function $ G_{\alpha ,p}'(x) $ 
generalizes the asymptotic result \cite[pp.795]{Y} 
of the distribution function $ G_{\alpha ,p}(x) $.

\

\Circ 
When $ X $ is a L\'evy process such that $ P(X(t)>0) $ for $ t>0 $ is a constant $ 0<c<1 $, 
Getoor--Sharpe \cite{GS} has proved that 
$ A_+ $ has the same law as a random variable $ Z_c $ 
whose law has a density $ f_{Z_c} $ given by 
\begin{align}
f_{Z_c}(x) = \frac{\sin c \pi}{\pi} \cdot x^{c-1} (1-x)^{-c} 
, \qquad 0<x<1 . 
\label{generalized arcsine law}
\end{align}
It is proved by Getoor--Sharpe \cite{GS2} and Bertoin--Yor \cite{BY} 
that the occupation time of such a process $ X $ 
time-changed by the inverse local time of an independent Markov process 
has the same law as $ Z_c $. 

We encounter a drastically different situation again 
in the case of L\'evy excursions. 
Suppose that $ X $ obeys the conditional law of the excursion measure 
of an $ \alpha $-stable process of index $ 1<\alpha <2 $ 
given that the lifetime equals one. 
The law is that of the normalized excursion 
and is $ P^*_1 $ in the notation of Fitzsimmons--Getoor \cite{FG} 
to which we refer for the details. 
Then Fitzsimmons--Getoor \cite[eq. (4.26)]{FG} has proved 
the distribution function of $ A_+ $ 
which we denote by $ H_{1/\alpha }(x) $ 
where $ H_{\gamma } $ for $ 1/2<\gamma<1 $ is a function 
characterized by its generalized Stieltjes transform of index $ \gamma-1 $ as 
\begin{align}
\int_0^1 (\lambda + x)^{1-\gamma } \d H_{\gamma }(x) 
= \frac{\gamma }{(\lambda+1)^{\gamma } - \lambda^{\gamma }} 
, \qquad \lambda>0 . 
\label{GST of H gamma}
\end{align}
Integrating by parts in the LHS of \eqref{GST of H gamma} 
and then using the inversion formula \cite[Theorem 4.1]{Y}, 
we obtain 
\begin{align}
\int_0^x H_{\gamma }(y) \d y = \int_0^x (x-t)^{\gamma -1} h_{\gamma }(t) \d t 
\label{int H gamma}
\end{align}
where 
\begin{align}
h_{\gamma }(t) = \frac{\sin \gamma \pi}{\pi} \cdot \frac{\gamma}{1-\gamma } 
\cdot \frac{t^{\gamma }}{(1-t)^{2\gamma } + t^{2\gamma } 
- 2t^{\gamma }(1-t)^{\gamma } \cos \gamma \pi} . 
\label{h gamma}
\end{align}
We remak again on a mysterious resemblance between $ G_{\alpha ,1/2} $ and $ H_{\alpha } $. 
From \eqref{int H gamma} and Lemma \ref{lem: I alpha prime}, 
it follows that the distribution function is given by 
\begin{align}
H_{\gamma }(x) = \int_0^x (x-t)^{\gamma -1} h_{\gamma }'(t) \d t 
, \qquad 0 < x < 1 . 
\label{H gamma}
\end{align}
We point out here again that 
the integral $ \int_0^x (x-t)^{\gamma -1} h_{\gamma }''(t) \d t $ is meaningless 
because of the asymptotic $ h_{\gamma }''(t) \sim C_2 t^{\gamma -2} $ as $ t \to 0+ $ 
for some constant $ C_2 $. 
Nevertheless, the second one of our main theorems is 
\begin{Thm} \label{thm2}
Let $ 1/2 < \gamma < 1 $. 
Then the function $ H_{\gamma }(x) $ is infinitely differentiable in $ (0,1) $. 
In particular, its derivative is given by 
\begin{align}
H_{\gamma }'(x) = \frac{1}{x} \int_0^x (x-t)^{\gamma -1} 
\kbra{ \gamma h_{\gamma }'(t) + t h_{\gamma }''(t) } \d t 
, \qquad 0 < x < 1 . 
\label{H gamma prime}
\end{align}
and it has an asymptotic 
\begin{align}
H_{\gamma }'(x) \sim \frac{\sin \gamma \pi}{\pi} 
\cdot \frac{\Gamma (\gamma +1)^2}{\Gamma (2 \gamma )} \cdot \frac{2 \gamma -1}{1-\gamma } 
\cdot x^{2 \gamma -2} 
\qquad \text{as} \ x \to 0+ . 
\label{}
\end{align}
\end{Thm}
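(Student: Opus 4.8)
The plan is to run the same homogenization argument that underlies Theorem~\ref{thm1}. Starting from the representation \eqref{H gamma} and substituting $ t = xu $, one gets
\begin{align}
H_{\gamma }(x) = x^{\gamma } \Phi_{\gamma }(x) ,
\qquad
\Phi_{\gamma }(x) := \int_0^1 (1-u)^{\gamma -1} h_{\gamma }'(xu) \, \d u ,
\qquad 0 < x < 1 .
\label{plan-scaling}
\end{align}
The denominator of $ h_{\gamma } $ in \eqref{h gamma} equals $ a^{2} + b^{2} - 2ab\cos \gamma \pi = (a-b)^{2} + 2ab(1-\cos \gamma \pi) > 0 $ with $ a = (1-t)^{\gamma } $, $ b = t^{\gamma } $, so $ h_{\gamma } \in C^{\infty }((0,1)) $; and expanding $ (1-t)^{2\gamma } $, $ (1-t)^{\gamma } $ near $ t = 0 $ shows that $ h_{\gamma }(t) = c_{\gamma }\, t^{\gamma }\, \Psi (t,t^{\gamma }) $ with $ \Psi $ real-analytic near the origin, $ \Psi (0,0) = 1 $, and $ c_{\gamma } = \tfrac{\sin \gamma \pi}{\pi}\cdot \tfrac{\gamma }{1-\gamma } $; in particular $ h_{\gamma }^{(k)}(t) = O(t^{\gamma -k}) $ as $ t \to 0+ $ for each $ k \ge 0 $. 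Since the singularity of $ h_{\gamma } $ at $ t = 1 $ never enters ($ xu < 1 $), these bounds let one differentiate \eqref{plan-scaling} under the integral sign arbitrarily often on each compact subinterval of $ (0,1) $: the $ k $-th $ x $-derivative of $ h_{\gamma }'(xu) $ is $ u^{k} h_{\gamma }^{(k+1)}(xu) $, which is dominated there by an integrable function of $ u $ (by a constant times $ u^{\gamma -1} $ near $ u = 0 $, using $ \gamma > 0 $, and by a constant times $ (1-u)^{\gamma -1} $ near $ u = 1 $). Hence $ \Phi_{\gamma } \in C^{\infty }((0,1)) $ and therefore $ H_{\gamma } \in C^{\infty }((0,1)) $; this is what the smoothness lemma of Section~\ref{sec: frac} supplies when applied to $ \phi = h_{\gamma }' $.

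To obtain \eqref{H gamma prime} I would differentiate \eqref{plan-scaling} once, getting $ H_{\gamma }'(x) = \gamma x^{\gamma -1}\Phi_{\gamma }(x) + x^{\gamma }\Phi_{\gamma }'(x) $ with $ \Phi_{\gamma }'(x) = \int_0^1 (1-u)^{\gamma -1}\, u\, h_{\gamma }''(xu)\, \d u $, and then change variables back via $ t = xu $ in both integrals: the first term becomes $ \tfrac{\gamma }{x}\int_0^x (x-t)^{\gamma -1} h_{\gamma }'(t)\, \d t $ and the second $ \tfrac{1}{x}\int_0^x (x-t)^{\gamma -1}\, t\, h_{\gamma }''(t)\, \d t $, whose sum is the right-hand side of \eqref{H gamma prime}. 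The point worth stressing --- and the reason this avoids the integral $ \int_0^x (x-t)^{\gamma -1} h_{\gamma }''(t)\, \d t $ that was noted to be meaningless --- is that after the substitution the differentiation lands on $ h_{\gamma }'(xu) $ and produces the factor $ u\, h_{\gamma }''(xu) $, i.e.\ an extra $ t/x $; since $ h_{\gamma }''(t) = O(t^{\gamma -2}) $, this makes $ t\, h_{\gamma }''(t) = O(t^{\gamma -1}) $, which is integrable against $ (x-t)^{\gamma -1} $.

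For the asymptotics I would feed into \eqref{H gamma prime} the expansion read off from $ h_{\gamma }(t) = c_{\gamma } t^{\gamma } + O(t^{2\gamma }) $ (with its derivatives, justified by the analytic structure above):
\begin{align}
\gamma h_{\gamma }'(t) + t\, h_{\gamma }''(t)
&= c_{\gamma }\cbra{ \gamma^{2} + \gamma (\gamma -1) }\, t^{\gamma -1} + O(t^{2\gamma -1}) \nonumber \\
&= c_{\gamma }\, \gamma (2\gamma -1)\, t^{\gamma -1} + O(t^{2\gamma -1})
\qquad (t \to 0+) .
\label{plan-exp}
\end{align}
Using $ \int_0^x (x-t)^{\gamma -1} t^{\gamma -1}\, \d t = x^{2\gamma -1} B(\gamma ,\gamma ) = x^{2\gamma -1}\, \Gamma (\gamma )^{2}/\Gamma (2\gamma ) $ and noting that the $ O(t^{2\gamma -1}) $ remainder contributes only $ O(x^{3\gamma -2}) = o(x^{2\gamma -2}) $, one arrives at
\begin{align}
H_{\gamma }'(x)
&\sim \frac{c_{\gamma }\, \gamma (2\gamma -1)}{x}\cdot x^{2\gamma -1}\cdot \frac{\Gamma (\gamma )^{2}}{\Gamma (2\gamma )} \nonumber \\
&= \frac{\sin \gamma \pi}{\pi}\cdot \frac{\Gamma (\gamma +1)^{2}}{\Gamma (2\gamma )}\cdot \frac{2\gamma -1}{1-\gamma }\cdot x^{2\gamma -2} ,
\end{align}
where the last step uses $ \gamma^{2}\Gamma (\gamma )^{2} = \Gamma (\gamma +1)^{2} $ and the value of $ c_{\gamma } $. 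The hypothesis $ 1/2 < \gamma < 1 $ enters precisely here: $ 2\gamma -1 > 0 $ and $ 1-\gamma > 0 $ make the leading constant positive, while $ 2\gamma -2 \in (-1,0) $, so the density blows up at the origin.

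The step I expect to be the main obstacle is the uniform bookkeeping underlying these manipulations: establishing $ h_{\gamma }^{(k)}(t) = O(t^{\gamma -k}) $ as $ t \to 0+ $ from the $ \Psi (t,t^{\gamma }) $ structure, justifying the repeated differentiation under the integral sign in \eqref{plan-scaling}, and --- most delicately --- controlling the remainder in \eqref{plan-exp} uniformly for $ t \in (0,x) $ so that after integration against $ (x-t)^{\gamma -1} $ it is genuinely negligible. These are elementary estimates but require care; once the general lemmas on Riemann--Liouville fractional integrals of Section~\ref{sec: frac} are available, the rest is a direct computation parallel to the proof of Theorem~\ref{thm1}.
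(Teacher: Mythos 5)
Your proposal is correct, and it reaches both the formula \eqref{H gamma prime} and the stated asymptotic with the right constant; but it packages the argument differently from the paper. The paper works abstractly with the Riemann--Liouville operator: smoothness of $H_{\gamma}$ comes from the identity $\delta(I^{\alpha}[f])=I^{\alpha}[\alpha f+\delta f]$ (Proposition \ref{prop: key1}, iterated in Corollary \ref{cor: key1}) applied to the class $\cF$, which contains $h_{\gamma}$, via $I^1[H_{\gamma}]=\Gamma(\gamma)I^{\gamma}[h_{\gamma}]$ and Lemma \ref{lemma} (note the smoothness lemma is applied to $h_{\gamma}$ itself, not to $h_{\gamma}'$ as your aside suggests --- a harmless discrepancy); the formula \eqref{H gamma prime} is then exactly Proposition \ref{prop: key1} applied to $f=h_{\gamma}'$, and the asymptotics follow from the Karamata-type Proposition \ref{prop: key2} applied to $\gamma h_{\gamma}'+\delta h_{\gamma}'\sim \frac{\sin\gamma\pi}{\pi}\frac{\gamma^2(2\gamma-1)}{1-\gamma}t^{\gamma-1}$. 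Your route is the same mechanism in scaled coordinates: after the substitution $t=xu$, differentiating $h_{\gamma}'(xu)$ in $x$ produces $u\,h_{\gamma}''(xu)=\frac{1}{x}(\delta h_{\gamma}')(xu)$, so your computation of $H_{\gamma}'$ is literally the scaling form of \eqref{derivative}; what replaces the class $\cF$ and Corollary \ref{cor: key1} is your direct bound $h_{\gamma}^{(k)}(t)=O(t^{\gamma-k})$ drawn from the structure $h_{\gamma}(t)=c_{\gamma}t^{\gamma}\Psi(t,t^{\gamma})$, together with differentiation under the integral sign on compact subintervals, and what replaces Proposition \ref{prop: key2} is an explicit two-term expansion plus the Beta integral, with the remainder $O(t^{2\gamma-1})$ contributing $O(x^{3\gamma-2})=o(x^{2\gamma-2})$. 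Both steps you flag as delicate do go through: the domination needed for differentiating under the integral is supplied by $u^{k}h_{\gamma}^{(k+1)}(xu)\le C_{a,b,k}\,u^{\gamma-1}$ for $x$ in a compact subinterval, and the remainder bound holds on a fixed neighborhood of $0$, hence uniformly on $(0,x)$ for small $x$. The trade-off is that the paper's lemmas are general and serve Theorems \ref{thm1} and \ref{thm2} simultaneously (and need only asymptotic equivalence, via regular variation), whereas your argument is more elementary and self-contained but tailored to the specific analytic structure of $h_{\gamma}$ and requires the explicit bookkeeping you describe.
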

The proof of Theorem \ref{thm2} will be given in Sections \ref{sec: frac} and \ref{sec: asymp}.

\section{Differentiability of Riemann--Liouville fractional integrals} \label{sec: frac}

We denote by $ C(0,1) $ (resp. $ C_b(0,1) $) 
the class of continuous (resp. and bounded) functions on $ (0,1) $, 
and by $ C^n(0,1) $ the class of $ n $-times differentiable functions. 
We denote by $ L^1(0,r) $ for $ 0<r<1 $ the class of Borel measurable functions on $ (0,1) $ 
which are integrable on $ (0,r) $ with respect to the Lebesgue measure, 
and define $ L^1(0,1-) = \cap_{0<r<1} L^1(0,r) $. 

For $ \alpha >0 $, 
we define a linear operator $ I^{\alpha } $ on $ C_b(0,1) $ by 
\begin{align}
I^{\alpha }[f](x) = \int_0^x \frac{(x-t)^{\alpha -1}}{\Gamma (\alpha )} f(t) \d t 
, \qquad 0<x<1 , \ f \in C_b(0,1) 
\label{}
\end{align}
where the integral $ \int_0^x \d t $ stands for $ \int_0^1 \d t 1_{(0,x)}(t) $ 
in the sense of the Lebesgue integral. 
We remark that, if $ \alpha $ is a positive integer $ \alpha = n $, 
then $ I^n[f] $ is the $ n $-th multiple integral of $ f $. 
It is obvious that 
\begin{align}
\int_0^r \abra{ I^{\alpha }[f](x) } \d x 
\le \frac{r^{\alpha }}{\Gamma (\alpha +1)} \int_0^r |f(x)| \d x 
, \qquad 0<r<1, \ f \in C_b(0,1) . 
\label{}
\end{align}
Thus the operator $ I^{\alpha } $ on $ C_b(0,1) $ 
extends to a linear operator on $ L^1(0,1-) $ 
which is continuous when restricted on $ L^1(0,r) $ for all $ 0<r<1 $. 
We will denote the extension by the same symbol $ I^{\alpha } $ 
and we call it the {\em Riemann--Liouville fractional integral of order $ \alpha $}. 
The following lemma asserts that 
$ I^{\alpha } $ preserves the space of continuous functions:  

\begin{Lem}
Let $ \alpha >0 $. 
For $ f \in C_b(0,1) $ and for $ 0<a<b<1 $, the inequality 
\begin{align}
|I^{\alpha }[f](x)| 
\le C^1_{\alpha ,a,b} \int_0^a |f(x)| \d x 
+ C^2_{\alpha ,a,b} \sup_{x \in [a,b]} |f(x)| 
, \qquad x \in (a,b) 
\label{conti lemma ineq}
\end{align}
holds 
where $ C^1_{\alpha ,a,b}=\max \{ (b-a)^{\alpha -1},a^{\alpha -1} \} / \Gamma (\alpha ) $ 
and $ C^2_{\alpha ,a,b}=(b-a)^{\alpha } / \Gamma (\alpha +1) $. 
In particular, if $ f \in L^1(0,1-) \cap C(0,1) $, then 
$ I^{\alpha }[f] $ possesses a continuous version on $ (0,1) $. 
\end{Lem}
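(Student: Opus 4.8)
The plan is to cut the defining integral at $t=a$, which confines the only possible singularity of the kernel $(x-t)^{\alpha-1}$, namely at $t=x$, to a single well-behaved term. Fix $0<a<b<1$ and $x\in(a,b)$ and write $\Gamma(\alpha)I^{\alpha}[f](x)=J_1(x)+J_2(x)$, where $J_1(x)=\int_0^a(x-t)^{\alpha-1}f(t)\,\d t$ and $J_2(x)=\int_a^x(x-t)^{\alpha-1}f(t)\,\d t$. In $J_2$ the genuine fractional-integral structure appears, but it acts only on the restriction of $f$ to $[a,b]$, which is bounded — a classical, harmless situation; in $J_1$ the constraint $0<t<a<x$ keeps $x-t$ away from $0$, so there the kernel is just a bounded weight and $f$ enters only through $\int_0^a|f|$.

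For $J_2$ one bounds $|f(t)|\le\sup_{[a,b]}|f|$ on the range of integration and uses $\int_a^x(x-t)^{\alpha-1}\,\d t=(x-a)^{\alpha}/\alpha\le(b-a)^{\alpha}/\alpha$, obtaining $|J_2(x)|\le\Gamma(\alpha)\,C^{2}_{\alpha,a,b}\sup_{[a,b]}|f|$. For $J_1$, since $x-a\le x-t\le x\le b$ for $0<t<a$, monotonicity of $s\mapsto s^{\alpha-1}$ bounds the kernel there by a constant depending only on $\alpha,a,b$; for $0<\alpha<1$ one first cuts $J_1$ at $t=a/2$, bounding the kernel by $(a/2)^{\alpha-1}$ on $(0,a/2)$ and, by subadditivity of $s\mapsto s^{\alpha}$, trading the strip $(a/2,a)$ for a supremum term. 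In either case $|J_1(x)|\le\Gamma(\alpha)\,C^{1}_{\alpha,a,b}\int_0^{a}|f|$, and adding the two estimates and dividing by $\Gamma(\alpha)$ yields \eqref{conti lemma ineq}.

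For the ``in particular'' part, fix $[a,b]\subset(0,1)$; since continuity is local it suffices to produce a continuous version of $I^{\alpha}[f]$ on $(a,b)$. Using linearity of the already-defined operator $I^{\alpha}$ on $L^1(0,1-)$, decompose $f=f\mathbf{1}_{(0,a)}+f\mathbf{1}_{[a,b]}+f\mathbf{1}_{(b,1)}$; the last term contributes nothing to $I^{\alpha}[f](x)$ for $x<b$. For the first term, $I^{\alpha}[f\mathbf{1}_{(0,a)}](x)=\frac{1}{\Gamma(\alpha)}\int_0^a(x-t)^{\alpha-1}f(t)\,\d t$ on $(a,b)$, and on any compact subinterval $[a_1,b_1]\subset(a,b)$ one has $x-t\ge a_1-a>0$, so the integrand is dominated by $\max\{(a_1-a)^{\alpha-1},b_1^{\alpha-1}\}\,|f(t)|\in L^1(0,a)$ and dominated convergence gives continuity. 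For the middle term, the substitution $t=a+(x-a)s$ gives $I^{\alpha}[f\mathbf{1}_{[a,b]}](x)=\frac{(x-a)^{\alpha}}{\Gamma(\alpha)}\int_0^1(1-s)^{\alpha-1}f(a+(x-a)s)\,\d s$, whose integrand is continuous in $x$ (as $f$ is continuous on $[a,b]$) and dominated by $(1-s)^{\alpha-1}\sup_{[a,b]}|f|\in L^1(0,1)$, so it too is continuous on $(a,b)$. The sum is the desired version, and since $[a,b]$ was arbitrary, $I^{\alpha}[f]$ has a continuous version on $(0,1)$. (Alternatively, approximate $f$ by $f_n\in C_b(0,1)$ with $f_n\to f$ in $L^1(0,a)$ and locally uniformly on $(a,1)$; then \eqref{conti lemma ineq} makes $I^{\alpha}[f_n]$ uniformly Cauchy on $(a,b)$, while the a priori bound $\int_0^r|I^{\alpha}[f_n-f]|\,\d x\le\frac{r^{\alpha}}{\Gamma(\alpha+1)}\int_0^r|f_n-f|\,\d x$ identifies the limit with $I^{\alpha}[f]$.)

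The argument is essentially organizational, and the only point requiring care is the term $J_1$ (equivalently $I^{\alpha}[f\mathbf{1}_{(0,a)}]$): one must notice that confining $t$ below $a<x$ keeps the kernel off its singularity, which is exactly what lets $f$ enter through its $L^1$-norm rather than a supremum. Note that the pointwise kernel bound on $\{0<t<a<x<b\}$ degenerates as $x\downarrow a$ when $0<\alpha<1$; this is why, in that range, the tail $J_1$ is handled via the extra cut at $t=a/2$ (absorbing a strip into the supremum term) and why the continuity conclusion is obtained by working over compact subintervals of $(a,b)$ rather than uniformly on $(a,b)$.
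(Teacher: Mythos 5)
Your estimate of $J_2$ is correct and yields exactly $C^2_{\alpha,a,b}$, and your second paragraph (splitting $f=f\mathbf{1}_{(0,a)}+f\mathbf{1}_{[a,b]}+f\mathbf{1}_{(b,1)}$ and proving continuity on compact subintervals by dominated convergence and the substitution $t=a+(x-a)s$) is a correct proof of the ``in particular'' assertion, by a different and in fact more robust route than the paper's, which approximates $f$ by $f_n\in C_b(0,1)$ and deduces locally uniform convergence of $I^{\alpha}[f_n]$ from the displayed inequality. The genuine gap is in $J_1$, i.e.\ in the claim $|J_1(x)|\le\Gamma(\alpha)C^1_{\alpha,a,b}\int_0^a|f|$ for all $x\in(a,b)$. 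For $0<\alpha<1$ your patch --- cutting at $t=a/2$ and ``trading the strip $(a/2,a)$ for a supremum term'' --- cannot work: the only supremum admitted on the right-hand side of \eqref{conti lemma ineq} is $\sup_{[a,b]}|f|$, which gives no control of $f$ on $(a/2,a)$, while the kernel is unbounded on that strip as $x\downarrow a$, $t\uparrow a$. In fact no finite constants of this shape exist when $\alpha<1$: take $f$ a continuous tent of height $1$ supported in $(a-\varepsilon,a)$, so $f(a)=0$, $\sup_{[a,b]}|f|=0$, $\int_0^a f=\varepsilon/2$, and take $x=a+\varepsilon$; then $I^{\alpha}[f](x)\ge 2^{\alpha-2}\varepsilon^{\alpha}/\Gamma(\alpha)$, which exceeds any multiple of $\varepsilon$ for small $\varepsilon$, so \eqref{conti lemma ineq} fails. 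For $\alpha>1$ your own kernel bound on $(0,a)$ is $x^{\alpha-1}\le b^{\alpha-1}$, which strictly exceeds $\Gamma(\alpha)C^1_{\alpha,a,b}=\max\{(b-a)^{\alpha-1},a^{\alpha-1}\}$ because $b>\max\{b-a,a\}$; a unit-mass bump near $t=0$ with $x$ near $b$ shows the stated $C^1$ is indeed too small there as well. So the sentence ``in either case $|J_1(x)|\le\Gamma(\alpha)C^1_{\alpha,a,b}\int_0^a|f|$'' does not follow from what precedes it, and cannot be repaired with these constants.

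What is true, and what both your continuity argument and the paper's application actually use, is the $x$-local version
\[
|I^{\alpha}[f](x)|\le\frac{\max\{(x-a)^{\alpha-1},x^{\alpha-1}\}}{\Gamma(\alpha)}\int_0^a|f(t)|\,\d t+\frac{(b-a)^{\alpha}}{\Gamma(\alpha+1)}\sup_{[a,b]}|f|,\qquad x\in(a,b),
\]
whose first factor is uniform only over compact $[a_1,b_1]\subset(a,b)$ (there it is at most $\max\{(a_1-a)^{\alpha-1},b_1^{\alpha-1}\}/\Gamma(\alpha)$). With this restatement your approximation alternative, and the paper's own argument, go through verbatim and give a continuous version on each $(a_1,b_1)$, hence on $(0,1)$. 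To be fair, the paper's proof simply declares \eqref{conti lemma ineq} ``immediate'' from the same split at $t=a$ and is affected by the same defect in the constants; note that it, too, only claims uniform convergence on compact subsets of $(a,b)$, which is exactly what the localized inequality provides. In summary: your continuity proof stands (modulo one line identifying the $L^1$-extension $I^{\alpha}[f]$ with the pointwise integral formula, which your parenthetical alternative supplies), but as a proof of the inequality with the constants $C^1_{\alpha,a,b}$, $C^2_{\alpha,a,b}$ the proposal has a real gap, and the honest fix is to localize the statement rather than to force those constants.
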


\begin{proof}
The inequality \eqref{conti lemma ineq} is immediate from the following obvious inequality: 
\begin{align}
|I^{\alpha }[f](x)| 
\le& \int_0^a \frac{(x-t)^{\alpha -1}}{\Gamma (\alpha )} |f(t)| \d t 
+ \int_a^x \frac{(x-t)^{\alpha -1}}{\Gamma (\alpha )} |f(t)| \d t 
\label{}
\end{align}
for all $ x \in (a,b) $. 

Let $ f \in L^1(0,1-) \cap C(0,1) $ and let $ 0<a<b<1 $. 
Take a sequence $ f_n \in C_b(0,1) $ 
which approximates $ f $ in $ L^1 $ on the interval $ (0,a) $ 
and uniformly on the interval $ [a,b] $. 
Then, by the inequality \eqref{conti lemma ineq}, 
we see that $ I^{\alpha }[f_n] $ converges to $ I^{\alpha }[f] $ 
uniformly on each compact subset of $ (a,b) $, 
which shows that $ I^{\alpha }[f] $ possesses a continuous version on $ (a,b) $. 
Since $ a $ and $ b $ are arbitrary, we obtain the second assertion. 
\end{proof}

If $ f \in L^1(0,1-) \cap C(0,1) $, 
we always assume that $ I^{\alpha }[f] $ stands for its continuous version. 
We know the well-known identity 
\begin{align}
I^{\beta } [I^{\alpha }[f]] = I^{\beta +\alpha }[f] 
, \qquad \alpha ,\beta >0 , \ f \in L^1(0,1-) . 
\label{}
\end{align}
This fact implies the following immediately: 
{\em If $ f \in L^1(0,1-) $ and if $ \alpha >1 $, 
then $ I^{\alpha }[f] $ is differentiable 
and 
\begin{align}
\frac{\d}{\d x} I^{\alpha }[f](x) = I^{\alpha -1}[f](x) 
\label{ddx I alpha f well-known}
\end{align}
holds.} 
As a sufficient condition for differentiability of $ I^{\alpha }[f] $ 
which is valid for all $ \alpha >0 $, 
the following lemma is also immediate: 

\begin{Lem} \label{lem: I alpha prime}
Let $ \alpha >0 $. 
If $ f \in C^1(0,1) $ and if $ f' \in L^1(0,1-) $, 
then $ I^{\alpha }[f] \in C^1(0,1) $ 
and its derivative is given by 
\begin{align}
\frac{\d}{\d x} I^{\alpha }[f](x) 
= \frac{f(0+)}{\Gamma (\alpha )} x^{\alpha -1} 
+ I^{\alpha }[f'](x) 
, \qquad 0<x<1 . 
\label{I alpha prime}
\end{align}
\end{Lem}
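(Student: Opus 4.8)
The plan is to reduce the statement to the already–noted fact \eqref{ddx I alpha f well-known}, namely that $ I^{\beta} $ with $ \beta>1 $ may be differentiated with derivative $ I^{\beta-1} $. The first observation is that the hypotheses $ f\in C^1(0,1) $ and $ f'\in L^1(0,1-) $ force the limit $ f(0+) $ to exist and be finite: for $ 0<a<b<1 $ one has $ f(b)-f(a)=\int_a^b f'(s)\,\d s $, and letting $ a\to 0+ $ the right–hand side converges by integrability of $ f' $ near the origin. In particular $ f $ is bounded near $ 0 $, so $ f\in L^1(0,1-) $ and $ I^{\alpha}[f] $ is well defined, and we may write $ f(t)=f(0+)+\int_0^t f'(s)\,\d s $ for $ 0<t<1 $.

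Next I would substitute this representation into the definition of $ I^{\alpha}[f](x) $ and split the integral in two. The part coming from the constant $ f(0+) $ equals $ f(0+)\int_0^x (x-t)^{\alpha-1}/\Gamma(\alpha)\,\d t = f(0+)\,x^{\alpha}/\Gamma(\alpha+1) $, which is smooth in $ x $ with derivative $ f(0+)\,x^{\alpha-1}/\Gamma(\alpha) $. For the remaining part I would apply the Fubini--Tonelli theorem — legitimate since $ f'\in L^1(0,x) $ and $ (x-t)^{\alpha-1} $ is locally integrable in $ t $ on $ (0,x) $ — to interchange the order of integration in $ \int_0^x \frac{(x-t)^{\alpha-1}}{\Gamma(\alpha)}\bigl(\int_0^t f'(s)\,\d s\bigr)\d t $; carrying out the inner $ t $–integral over $ (s,x) $ produces exactly $ \int_0^x \frac{(x-s)^{\alpha}}{\Gamma(\alpha+1)}f'(s)\,\d s = I^{\alpha+1}[f'](x) $. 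Thus
\[
I^{\alpha}[f](x) = \frac{f(0+)}{\Gamma(\alpha+1)}\,x^{\alpha} + I^{\alpha+1}[f'](x) , \qquad 0<x<1 .
\]

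The final step is to differentiate this identity. Since $ f'\in L^1(0,1-) $ and $ \alpha+1>1 $, formula \eqref{ddx I alpha f well-known} applies to the second term and gives $ \frac{\d}{\d x}I^{\alpha+1}[f'](x)=I^{\alpha}[f'](x) $; combined with the elementary derivative of the first term this yields \eqref{I alpha prime}. Because $ f'\in L^1(0,1-)\cap C(0,1) $, the continuity lemma proved just above shows $ I^{\alpha}[f'] $ has a continuous version on $ (0,1) $, whence $ I^{\alpha}[f]\in C^1(0,1) $. There is no genuine obstacle here; the only points requiring a moment's care are the existence of $ f(0+) $ and the justification of the Fubini interchange, both of which are immediate from the integrability hypothesis on $ f' $.
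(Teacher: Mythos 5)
Your proposal is correct and follows essentially the same route as the paper: both write $f(t)=f(0+)+\int_0^t f'(s)\,\d s$, deduce $I^{\alpha}[f](x)=\frac{f(0+)}{\Gamma(\alpha+1)}x^{\alpha}+I^{\alpha+1}[f'](x)$, and then differentiate. The only cosmetic difference is that you verify the identity $I^{\alpha}[I^1[f']]=I^{\alpha+1}[f']$ by an explicit Fubini computation and invoke \eqref{ddx I alpha f well-known} for the derivative, whereas the paper simply cites the semigroup identity and the fundamental theorem of calculus applied to $\int_0^x I^{\alpha}[f'](t)\,\d t$.
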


\begin{proof}
Since $ f' \in L^1(0,1-) $, the right-hand limit $ f(0+) $ exists. 
Since $ f(x) = f(0+) + I^1[f'](x) $, we have 
\begin{align}
I^{\alpha }[f](x) 
=& f(0+) I^{\alpha }[1](x) + I^{\alpha +1}[f'](x) 
\\
=& \frac{f(0+)}{\Gamma (\alpha +1)} x^{\alpha } + \int_0^x I^{\alpha }[f'](t) \d t . 
\label{}
\end{align}
This proves \eqref{I alpha prime}. 
\end{proof}

We can apply Lemma \ref{lem: I alpha prime} 
to $ f=g_{\alpha ,p} $ (resp. $ f=h_{\gamma } $) 
which is introduced in \eqref{g alpha p} (resp. \eqref{h gamma}) 
and obtain \eqref{G alpha p prime} (resp. \eqref{H gamma}). 
However, the integrability assumption of $ f' $ at the origin is {\em not} satisfied 
by $ f=g_{\alpha ,p}' $ nor by $ f=h_{\gamma }' $ 
because $ g_{\alpha ,p}''(t) \sim C_1 t^{\alpha -2} $ 
and $ h_{\gamma }''(t) \sim C_2 t^{\gamma -2} $ as $ t \to 0+ $. 
We would like to relax the integrability assumption of $ f' $ at the origin 
for differentiability of $ I^{\alpha }[f] $. 

For $ f \in C^1(0,1) $, we define 
\begin{align}
(\delta f)(x) = x f'(x) 
, \qquad x \in (0,1) . 
\end{align}
Now we obtain the key proposition as follows: 

\begin{Prop} \label{prop: key1}
Let $ \alpha >0 $. 
Suppose that $ f \in C^1(0,1) $ 
and that $ \delta f \in L^1(0,1-) $. 
Then $ f \in L^1(0,1-) $ and $ I^{\alpha }[f] \in C^1(0,1) $. 
Moreover, the following relation holds: 
\begin{align}
\delta (I^{\alpha }[f]) 
= I^{\alpha }[\alpha f + \delta f] . 
\label{derivative}
\end{align}
\end{Prop}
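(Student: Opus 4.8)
The plan is to first dispose of the claim $f \in L^1(0,1-)$, then establish the differentiation formula \eqref{derivative} by a regularization argument that replaces $f$ near the origin by something to which Lemma \ref{lem: I alpha prime} applies, and finally read off $I^{\alpha}[f] \in C^1(0,1)$ from the formula itself together with the continuity-preserving property of $I^{\alpha}$. For the integrability of $f$: fix $0 < r < 1$ and $0 < a < r$; since $f \in C^1(0,1)$ we may write $f(x) = f(a) + \int_a^x f'(s)\,\d s$ for $x \in (0,a)$, and because $\delta f = x f'(x) \in L^1(0,a)$ we get $|f'(s)| \le |(\delta f)(s)|/s$, so $\int_0^a |f(x)|\,\d x$ is controlled by $a|f(a)| + \int_0^a \int_s^a \frac{|(\delta f)(s)|}{s}\,\d x\,\d s = a|f(a)| + \int_0^a \frac{(a-s)}{s}|(\delta f)(s)|\,\d s \le a|f(a)| + \int_0^a |(\delta f)(s)|\,\d s < \infty$. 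Hence $f \in L^1(0,r)$ for every $r$, i.e. $f \in L^1(0,1-)$, and in particular $I^{\alpha}[f]$ is well-defined and (by the Lemma on page preceding Lemma \ref{lem: I alpha prime}, applied to $f \in L^1(0,1-) \cap C(0,1)$) has a continuous version on $(0,1)$.

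The core is the identity \eqref{derivative}. The natural route is to apply Lemma \ref{lem: I alpha prime} not to $f$ itself (whose $f'$ need not be in $L^1(0,1-)$) but to the truncated function: fix $\eps \in (0,1)$ and on $(0,1)$ set $f_\eps(x) = f(x \vee \eps)$, which is constant $= f(\eps)$ on $(0,\eps]$ and equals $f$ on $[\eps,1)$; this $f_\eps$ is continuous, lies in $C^1$ on each of $(0,\eps)$ and $(\eps,1)$, and has $f_\eps' \in L^1(0,1-)$ since it vanishes near $0$ — the mild kink at $x=\eps$ is harmless for the $L^1$-based formula. For $x > \eps$ one computes directly $I^{\alpha}[f_\eps](x) = I^{\alpha}[f](x) - \int_0^\eps \frac{(x-t)^{\alpha-1}}{\Gamma(\alpha)}\big(f(t) - f(\eps)\big)\,\d t$, and differentiating under the integral sign in the correction term (legitimate for $x > \eps$, where the integrand is smooth in $x$) together with the Lemma \ref{lem: I alpha prime} applied to $f_\eps$ gives, after multiplying by $x$, an expression for $(\delta I^{\alpha}[f])(x)$ on $(\eps, 1)$. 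One then checks that all $\eps$-dependent boundary contributions telescope into $I^{\alpha}[\alpha f + \delta f](x)$: the key algebraic point is the pointwise identity $\frac{\d}{\d x}\big(x \cdot \frac{(x-t)^{\alpha-1}}{\Gamma(\alpha)}\big) = \alpha \frac{(x-t)^{\alpha-1}}{\Gamma(\alpha)} + t \cdot \frac{(\alpha-1)(x-t)^{\alpha-2}}{\Gamma(\alpha)}$ combined with an integration by parts in $t$ that moves the $(x-t)^{\alpha-2}$ factor onto $f'$ and produces $\delta f = t f'$; this is precisely the mechanism behind \eqref{H gamma prime}. Letting $\eps \to 0+$ and using $\delta f \in L^1(0,1-)$ to control the remainder then yields \eqref{derivative} on all of $(0,1)$.

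Finally, once \eqref{derivative} is known as an identity between the (continuous) function $I^{\alpha}[\alpha f + \delta f]$ and $\delta(I^{\alpha}[f]) = x \frac{\d}{\d x} I^{\alpha}[f](x)$, we conclude $I^{\alpha}[f] \in C^1(0,1)$: the right-hand side $\alpha f + \delta f$ lies in $L^1(0,1-) \cap C(0,1)$ (as $f$ and $\delta f$ both do), so $I^{\alpha}[\alpha f + \delta f]$ has a continuous version on $(0,1)$, whence $\frac{\d}{\d x} I^{\alpha}[f](x) = \frac{1}{x} I^{\alpha}[\alpha f + \delta f](x)$ is continuous on $(0,1)$, which is exactly the $C^1$ assertion.

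The step I expect to be the main obstacle is making the $\eps \to 0+$ passage rigorous and controlling the correction term $\int_0^\eps \frac{(x-t)^{\alpha-1}}{\Gamma(\alpha)}\big(f(t)-f(\eps)\big)\,\d t$ and its $x$-derivative uniformly for $x$ in a compact subinterval of $(0,1)$: one must bound $f(t) - f(\eps) = -\int_t^\eps f'(s)\,\d s$ using only $\delta f \in L^1$, which gives $|f(t) - f(\eps)| \le \int_t^\eps \frac{|(\delta f)(s)|}{s}\,\d s \le \frac{1}{t}\int_0^\eps |(\delta f)(s)|\,\d s$, and then check that the resulting estimate on the correction and its derivative tends to $0$ with $\eps$ — the factor $1/t$ is integrable against $(x-t)^{\alpha-1}$ near $0$ when multiplied against the vanishing quantity $\int_0^\eps|(\delta f)|$, but this needs to be written out carefully, especially for $0 < \alpha < 1$ where $(x-t)^{\alpha-2}$ is non-integrable and the integration by parts must be organized so that no such term is ever integrated against a non-integrable weight on its own.
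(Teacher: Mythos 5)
Your route is genuinely different from the paper's. The paper avoids any truncation or limiting argument: it sets $g(t)=tf(t)$, observes that $g(0+)=0$ and $g'=f+\delta f\in L^1(0,1-)$, so Lemma \ref{lem: I alpha prime} applies directly to $g$ and gives $\frac{\d}{\d x}I^{\alpha}[g]=I^{\alpha}[f+\delta f]$; then the elementary kernel identity $x(x-t)^{\alpha-1}=(x-t)^{\alpha}+t(x-t)^{\alpha-1}$, i.e.\ $xI^{\alpha}[f]=\alpha I^{\alpha+1}[f]+I^{\alpha}[g]$, together with $\frac{\d}{\d x}I^{\alpha+1}[f]=I^{\alpha}[f]$, yields both $I^{\alpha}[f]\in C^1(0,1)$ and \eqref{derivative} in a few lines. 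Your truncation $f_\eps(x)=f(x\vee\eps)$ followed by the $\eps\to 0+$ passage reproduces the same cancellation by hand (and your $L^1(0,1-)$ argument for $f$ is the paper's), but it is much longer, and at the step you yourself single out as the crux the estimate you propose is wrong as stated.

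Concretely: for fixed $x$ the kernel $(x-t)^{\alpha-1}$ tends to $x^{\alpha-1}>0$ as $t\to 0+$, so $1/t$ is \emph{not} integrable against it near $t=0$; the bound $|f(t)-f(\eps)|\le \frac{1}{t}\int_0^\eps|(\delta f)(s)|\,\d s$ therefore produces a divergent majorant for the correction term and proves nothing. The repair is to keep the inner integral and apply Fubini: $\int_0^\eps|f(t)-f(\eps)|\,\d t\le\int_0^\eps\!\int_t^\eps\frac{|(\delta f)(s)|}{s}\,\d s\,\d t=\int_0^\eps|(\delta f)(s)|\,\d s\to 0$, which (the kernel and its $x$-derivative being bounded for $t\in(0,\eps)$, uniformly for $x$ in a compact subinterval of $(0,1)$ with $\eps$ small) kills the correction term and its derivative. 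You also need the boundary contribution $f(\eps)\{x^{\alpha}-(x-\eps)^{\alpha}\}/\Gamma(\alpha)$, which arises from Lemma \ref{lem: I alpha prime} applied to $f_\eps$ and from the integration by parts of $\int_\eps^x(x-t)^{\alpha}f'(t)\,\d t$, to vanish; this requires $\eps f(\eps)\to 0$, which again follows from $\delta f\in L^1(0,1-)$ by dominated convergence, since $\eps\int_\eps^r|f'(s)|\,\d s\le\int_\eps^r\frac{\eps}{s}|(\delta f)(s)|\,\d s\to 0$. With these two fixes, and after noting that Lemma \ref{lem: I alpha prime} extends to the piecewise-$C^1$ function $f_\eps$ (its proof only uses $f_\eps=f_\eps(0+)+I^1[f_\eps']$), your argument does go through; but the paper's substitution $g=tf$ packages exactly this bookkeeping with no limits at all, which is what the hypothesis $\delta f\in L^1(0,1-)$ is tailored for.
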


\begin{proof}
For $ 0<r<1 $, we have 
\begin{align}
\int_0^r |f(x)| \d x 
=& \int_0^r \d x \abra{ f(r) - \int_x^r f'(t) \d t } 
\label{}
\\
\le& r|f(r)| + \int_0^r \abra{ \delta f(t) } \d t < \infty . 
\label{}
\end{align}
This proves that $ f \in L^1(0,1-) $. 

Set $ g(t)=tf(t) $. 
Then we have $ g \in L^1(0,1-) \cap C^1(0,1) $ by the assumptions 
and we have $ g'=f+\delta f $. 
Since $ g $ satisfies the assumptions of Lemma \ref{lem: I alpha prime}, 
we see that $ I^{\alpha }[g] \in C^1(0,1) $ 
and that 
\begin{align}
\frac{\d}{\d x} I^{\alpha }[g](x) = I^{\alpha }[f+\delta f](x) . 
\label{ddx I alpha g}
\end{align}

We note that 
\begin{align}
\alpha I^{\alpha + 1}[f] 
= \int_0^x \frac{(x-t)^{\alpha -1}}{\Gamma (\alpha )} (x-t) f(t) \d t 
= x I^{\alpha }[f](x) - I^{\alpha }[g](x) . 
\label{I alpha + 1}
\end{align}
Hence we have 
\begin{align}
I^{\alpha }[f](x) = \frac{1}{x} \kbra{ \alpha I^{\alpha + 1}[f](x) + I^{\alpha }[g](x) } . 
\label{}
\end{align}
Now we conclude that $ I^{\alpha }[f] \in C^1(0,1) $, 
and using \eqref{ddx I alpha f well-known}, \eqref{ddx I alpha g} and \eqref{I alpha + 1}, 
we obtain 
\begin{align}
\frac{\d}{\d x} I^{\alpha }[f](x) = \frac{1}{x} I^{\alpha }[\alpha f + \delta f] (x) . 
\label{}
\end{align}
This completes the proof. 
\end{proof}

Immediately from Proposition \ref{prop: key1}, we obtain the following 

\begin{Cor} \label{cor: key1}
Let $ \alpha >0 $. 
Suppose that $ f \in C^n(0,1) $ 
and $ \delta^n f \in L^1(0,1-) $ for all $ n \ge 1 $. 
Then $ I^{\alpha }[f] \in C^n(0,1) $ for all $ n \ge 1 $ 
and its $ n $-th derivative for each $ n \ge 1 $ 
is given by 
\begin{align}
\frac{\d^n}{\d x^n} I^{\alpha }[f](x) = \frac{1}{x^n} I^{\alpha }[p_n(\alpha + \delta)f](x) 
\label{Dn I alpha f}
\end{align}
where $ p_n(t) = t(t-1) \cdots (t-n+1) $. 
\end{Cor}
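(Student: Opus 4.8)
The plan is to deduce Corollary \ref{cor: key1} from Proposition \ref{prop: key1} by induction on $n$. First I would check that the hypotheses propagate: if $f \in C^n(0,1)$ and $\delta^k f \in L^1(0,1-)$ for all $1 \le k \le n$, then the function $f_1 := (\alpha + \delta)f = \alpha f + \delta f$ satisfies the same type of hypotheses with $n$ replaced by $n-1$, since $f_1 \in C^{n-1}(0,1)$ and, crucially, $\delta$ commutes with the shift by $\alpha$ in the sense that $\delta^k f_1 = (\alpha + \delta)(\delta^k f)$ is a linear combination of $\delta^k f$ and $\delta^{k+1} f$, both of which lie in $L^1(0,1-)$ for $0 \le k \le n-1$. (One also needs $f \in L^1(0,1-)$, but that is already part of the conclusion of Proposition \ref{prop: key1}, applied at the first step.)

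Then the inductive step goes as follows. For $n=1$, formula \eqref{Dn I alpha f} is exactly \eqref{derivative} rewritten as $\frac{\d}{\d x} I^\alpha[f](x) = \frac{1}{x} I^\alpha[(\alpha + \delta)f](x)$, and $p_1(t) = t$, so this is the base case. For the inductive step, assume the formula holds for $n-1$ (applied to any admissible function). Apply Proposition \ref{prop: key1} once to get $\frac{\d}{\d x} I^\alpha[f](x) = \frac{1}{x} I^\alpha[f_1](x)$ with $f_1 = (\alpha+\delta)f$. Now differentiate $n-1$ more times. Using the Leibniz rule on the product $\frac{1}{x} \cdot I^\alpha[f_1](x)$ together with the induction hypothesis applied to $f_1$ (which tells us $\frac{\d^k}{\d x^k} I^\alpha[f_1](x) = \frac{1}{x^k} I^\alpha[p_k(\alpha+\delta)f_1](x)$ for $k \le n-1$), one collects terms. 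The key algebraic identity to verify is that
\begin{align}
\frac{\d^{n-1}}{\d x^{n-1}} \kbra{ \frac{1}{x} I^\alpha[f_1](x) } = \frac{1}{x^n} I^\alpha[p_{n-1}(\alpha + \delta - 1)(\alpha+\delta)f](x) , \n
\end{align}
and then that $p_{n-1}(t-1) \cdot t = p_n(t)$ as polynomials, which is just $t(t-1)\cdots(t-n+1) = t \cdot (t-1)(t-2)\cdots(t-n+1)$ — true by inspection. The shift by $-1$ appears because each factor of $1/x$ pulled out past a $\delta$ shifts the operator: more precisely, $\frac{\d^{j}}{\d x^{j}}\cbra{\frac{1}{x} u(x)}$ for a function $u$ that itself has the right form can be computed by noting $\frac{\d}{\d x}\cbra{\frac{1}{x}u} = \frac{1}{x}\cbra{u' - \frac{1}{x}u} = \frac{1}{x^2}(\delta - 1)u$, and iterating.

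An alternative, perhaps cleaner, organization is to prove the operator identity $x \frac{\d}{\d x} I^\alpha = I^\alpha (\alpha + \delta)$ on the appropriate domain (this is just \eqref{derivative}, since $x\frac{\d}{\d x}$ applied to $I^\alpha[f]$ is $\delta(I^\alpha[f])$), and then observe that $\cbra{x \frac{\d}{\d x}}^n = x^n \frac{\d^n}{\d x^n} + (\text{lower order in } \frac{\d}{\d x})$; indeed $\cbra{x\frac{\d}{\d x}}^n = \sum_{k} S(n,k) x^k \frac{\d^k}{\d x^k}$ with Stirling numbers, but the point one actually wants is the factorization $x^n \frac{\d^n}{\d x^n} = p_n\cbra{x\frac{\d}{\d x}} = \cbra{x\frac{\d}{\d x}}\cbra{x\frac{\d}{\d x} - 1}\cdots\cbra{x\frac{\d}{\d x} - n + 1}$, which is the classical identity for the Euler operator. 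Granting this, one gets $x^n \frac{\d^n}{\d x^n} I^\alpha[f] = p_n(\delta) I^\alpha[f] = p_n(\delta) I^\alpha[f]$, and then pushes $p_n(\delta)$ through $I^\alpha$ using \eqref{derivative} repeatedly: $\delta I^\alpha = I^\alpha(\alpha+\delta)$ gives $p_n(\delta) I^\alpha = I^\alpha \, p_n(\alpha + \delta)$ by a straightforward intertwining argument (each application of $\delta$ on the left turns into $\alpha + \delta$ on the right, and polynomial functional calculus respects this). This yields \eqref{Dn I alpha f} directly.

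The main obstacle is bookkeeping rather than anything conceptual: one must make sure that at every stage of the induction the function to which Proposition \ref{prop: key1} is being applied genuinely satisfies its hypotheses — i.e., that $p_k(\alpha+\delta)f \in C^1(0,1)$ and $\delta\, p_k(\alpha + \delta) f \in L^1(0,1-)$ — which reduces to the closure remark in the first paragraph that the class $\{f : f \in C^n(0,1),\ \delta^j f \in L^1(0,1-) \text{ for } 1 \le j \le n\}$ is mapped into the corresponding class with $n$ decreased by one under $f \mapsto (\alpha + \delta)f$, and that $L^1(0,1-)$ is a vector space so finite linear combinations stay in it. Once that is pinned down, the Euler-operator factorization $x^n \partial_x^n = p_n(x\partial_x)$ and the intertwining $\delta \circ I^\alpha = I^\alpha \circ (\alpha + \delta)$ combine to give the result with no further analysis.
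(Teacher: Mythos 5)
Your proposal is correct and rests on the same engine as the paper's own proof: an induction whose inductive step is Proposition \ref{prop: key1} (the intertwining $\delta I^{\alpha}=I^{\alpha}(\alpha+\delta)$) together with the falling-factorial identity, plus the routine check that $f\mapsto(\alpha+\delta)f$ preserves the hypotheses. The paper organizes the induction slightly more directly --- it differentiates \eqref{Dn I alpha f} once more, getting $\frac{1}{x^{n+1}}I^{\alpha}[(\alpha+\delta-n)p_n(\alpha+\delta)f]=\frac{1}{x^{n+1}}I^{\alpha}[p_{n+1}(\alpha+\delta)f]$ --- which avoids the Leibniz/Euler-operator bookkeeping of your variants, but the underlying argument is the same.
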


\begin{proof}
We prove the assertion by induction. 
The assertion for $ n=1 $ is nothing but Proposition \ref{prop: key1}. 
Suppose that 
we have $ I^{\alpha }[f] \in C^n(0,1) $ and \eqref{Dn I alpha f} 
for a fixed $ n \ge 1 $. 
We note that 
$ p_n(\alpha + \delta)f \in C^1(0,1) $ and $ \delta p_n(\alpha + \delta)f \in L^1(0,1-) $ 
by the assumptions that $ f \in C^n(0,1) $ and $ \delta^n f \in L^1(0,1-) $ for all $ n \ge 1 $. 
Hence it follows from Proposition \ref{prop: key1} that 
$ I^{\alpha }[p_n(\alpha + \delta)f] \in C^1(0,1) $ 
and we have 
\begin{align}
\delta I^{\alpha }[p_n(\alpha + \delta)f] = I^{\alpha }[(\alpha + \delta) p_n(\alpha + \delta)f] . 
\label{}
\end{align}
Differentiating both sides of \eqref{Dn I alpha f} in $ x $, we obtain 
\begin{align}
\frac{\d^{n+1}}{\d x^{n+1}} I^{\alpha }[f](x) 
=& \frac{1}{x^{n+1}} \kbra{ \delta I^{\alpha }[p_n(\alpha + \delta)f](x) 
- n I^{\alpha }[p_n(\alpha + \delta)f](x) } 
\\
=& \frac{1}{x^{n+1}} I^{\alpha }[(\alpha + \delta -n) p_n(\alpha + \delta)f](x) 
\\
=& \frac{1}{x^{n+1}} I^{\alpha }[p_{n+1}(\alpha + \delta)f](x) . 
\label{}
\end{align}
This shows that the assertion holds for $ n+1 $, 
which completes the proof. 
\end{proof}

Now let us prove smoothness of our distribution functions 
$ G_{\alpha ,p} $ and $ H_{\gamma } $ which has been introduced 
in \eqref{G alpha p} and \eqref{H gamma}. 
For this purpose, we introduce a class of functions $ \cF $. 
Define $ \cP $ by the class of functions on $ (0,1) $ 
which are linear combinations 
of $ t^{\beta } v(t) $ for some $ \beta >0 $ 
and some infinitely differentiable function $ v $ on $ (0,1) $ 
whose derivatives of all orders are bounded near $ t=0 $. 
Now define $ \cF $ by the class of functions on $ (0,1) $ 
spanned by functions $ g $ of the form $ g(t) = t^{\beta -1} u(t)/v(t) $ 
for some $ \beta>0 $ and $ u,v \in \cP $. 
Note that $ \cF $ is a linear subspace of $ L^1(0,1-) \cap C(0,1) $.

\begin{Lem} \label{lemma}
If $ g \in \cF $, then $ \delta g \in \cF $. 
In particular, if $ g \in \cF $, 
then $ I^{\alpha }[g] $ for $ \alpha >0 $ is infinitely differentiable on $ (0,1) $. 
\end{Lem}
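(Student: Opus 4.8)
The plan is to verify first that $\cF$ is closed under $\delta$, and then deduce the smoothness claim by applying Corollary \ref{cor: key1}. For the closure under $\delta$, it suffices by linearity to treat a single generator $g(t) = t^{\beta-1} u(t)/v(t)$ with $\beta > 0$ and $u,v \in \cP$. I would compute
\begin{align}
\delta g(t)
= t g'(t)
= (\beta-1) t^{\beta-1} \frac{u(t)}{v(t)}
+ t^{\beta-1} \frac{t\,u'(t)\, v(t) - t\, u(t)\, v'(t)}{v(t)^2}
\label{}
\end{align}
and check that each summand again lies in $\cF$. The first is $(\beta-1)g \in \cF$. For the second, the key points are that (a) $\cP$ is closed under multiplication, since the product $t^{\beta_1}v_1 \cdot t^{\beta_2} v_2 = t^{\beta_1+\beta_2}(v_1 v_2)$ has exponent $\beta_1+\beta_2 > 0$ and $v_1 v_2$ is again infinitely differentiable with all derivatives bounded near $0$ by the Leibniz rule; (b) $\cP$ is closed under the operation $w \mapsto t w'$: writing $w = t^\beta v$ we get $t w' = \beta t^\beta v + t^{\beta+1} v' = t^\beta(\beta v + t v')$, and $\beta v + t v' = \beta v + \delta v$ has all derivatives bounded near $0$ because $v$ does (each derivative of $tv'$ is a bounded combination of derivatives of $v$). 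Hence $t u' \in \cP$ and $t v' \in \cP$, so $tu' \cdot v - u \cdot t v' \in \cP$ (sums and products of elements of $\cP$), and $v^2 \in \cP$; writing the numerator of that $\cP$-element as $t^{\beta'} w$ we obtain the second summand in the form $t^{\beta-1+\beta'} w / v^2$, which is a generator of $\cF$ (with positive exponent $\beta - 1 + \beta'$, since $\beta' > 0$ and $\beta > 0$ force $\beta - 1 + \beta' > -1$; and in fact one should be slightly careful: if $\beta - 1 + \beta' \le 0$ one rewrites $t^{\beta-1+\beta'} = t^{\beta''-1}$ with $\beta'' = \beta + \beta' > 0$, which is the exponent actually required by the definition of $\cF$). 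This gives $\delta g \in \cF$.

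For the second assertion, I would iterate: since $\cF$ is closed under $\delta$, any $g \in \cF$ satisfies $\delta^n g \in \cF \subseteq L^1(0,1-)$ for all $n \ge 1$, and moreover $g \in C^\infty(0,1)$ because every generator $t^{\beta-1}u/v$ is smooth on $(0,1)$ (here one notes $v$ need not vanish on $(0,1)$ — this is implicit in the definition, as otherwise $g \notin C(0,1)$; alternatively the statement is understood on the open set where $v \ne 0$, but since $\cF \subseteq C(0,1)$ by the remark preceding the lemma, $v$ is indeed nonvanishing on $(0,1)$). Therefore $g$ satisfies the hypotheses of Corollary \ref{cor: key1}, and we conclude $I^\alpha[g] \in C^n(0,1)$ for every $n \ge 1$, i.e. $I^\alpha[g]$ is infinitely differentiable on $(0,1)$.

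The main obstacle — really the only one requiring care — is bookkeeping the exponents and the ``boundedness of all derivatives near $0$'' condition through the quotient rule. The quotient introduces $v^2$ in the denominator, which stays in $\cP$, but one must confirm that differentiating $u/v$ repeatedly never destroys the structure: this is exactly why $\cF$ was defined with a $\cP$-element in the denominator rather than insisting the denominator be a single monomial, so the class is tailored to be stable. The other mild subtlety is matching the literal form demanded in the definition of $\cF$ (exponent written as $\beta - 1$ with $\beta > 0$, i.e. exponent $> -1$) after collecting powers of $t$; this is handled by absorbing any extra positive power of $t$ from the numerator $\cP$-element into the exponent. Neither point is deep, so the proof is short once the algebra of $\cP$ (closed under sums, products, and $w \mapsto tw' = \delta w$) is recorded.
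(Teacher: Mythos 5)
Your proposal is correct and follows essentially the same route as the paper: compute $\delta g$ for a generator $t^{\beta-1}u/v$ by the quotient rule, use that $\cP$ is closed under products and under $\delta$ (facts the paper records as remarks and you verify explicitly), conclude $\delta^n g \in \cF \subseteq L^1(0,1-)$ by induction, and invoke Corollary \ref{cor: key1}. The only cosmetic difference is that the paper keeps the whole expression over $v^2$ so the result is literally a generator of $\cF$, whereas you split off $(\beta-1)g$ and then handle the exponent bookkeeping, which amounts to the same thing.
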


\begin{proof}
Let $ g(t)=t^{\beta -1} u(t)/v(t) $ 
for some $ \beta >0 $ and $ u,v \in \cP $. 
Then 
\begin{align}
\delta g(t) 
= \frac{t^{\beta -1} \kbra{(\beta -1) u(t) v(t) + \delta u(t) v(t) - u(t) \delta v(t)}}{v(t)^2} . 
\label{delta gt}
\end{align}
We remark the following facts: 
(i) $ u \in \cP $ implies $ \delta u \in \cP $; 
(ii) $ u,v \in \cP $ implies $ uv \in \cP $. 
Hence, by \eqref{delta gt}, we see that $ \delta g \in \cF $. 

Now it follows by induction 
that $ \delta^n g \in \cF $ for $ n \ge 1 $. 
Therefore we obtain 
$ \delta^n g \in L^1(0,1-) $ for $ n \ge 1 $. 
This proves the second assertion by Corollary \ref{cor: key1}. 
\end{proof}

Now we obtain the following result, 
which is the former halves of the statements of Theorems \ref{thm1} and \ref{thm2}: 

\begin{Thm}
The distribution functions $ G_{\alpha ,p} $ and $ H_{\gamma } $ 
are infinitely differentiable in $ (0,1) $. 
In particular, the densities $ G_{\alpha ,p}' $ and $ H_{\gamma }' $ 
are given by \eqref{G alpha p prime} and \eqref{H gamma prime}, respectively. 
\end{Thm}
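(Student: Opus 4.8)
The plan is to verify that the functions $ g_{\alpha ,p} $ and $ h_{\gamma } $ (and their first derivatives) belong to the class $ \cF $, and then invoke Lemma \ref{lemma} together with the representations \eqref{G alpha p} and \eqref{int H gamma}. Concretely, for $ G_{\alpha ,p} $: we first observe from \eqref{G alpha p} that $ G_{\alpha ,p} = \Gamma (\alpha ) I^{\alpha }[g_{\alpha ,p}] $, so by Lemma \ref{lemma} it suffices to check $ g_{\alpha ,p} \in \cF $. Looking at \eqref{g alpha p}, the numerator is $ \frac{\sin \alpha \pi}{\pi}(1-p) t^{\alpha } $, which is of the form $ t^{\alpha } \cdot \text{const} $, hence in $ \cP $; the denominator is a polynomial expression in $ t^{\alpha } $ and $ (1-t)^{\alpha } $, and since $ t^{\alpha } $ and $ (1-t)^{\alpha } $ lie in $ \cP $ and $ \cP $ is closed under products and sums (as noted in the proof of Lemma \ref{lemma}), the denominator lies in $ \cP $. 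One must also check the denominator does not vanish on $ (0,1) $ — this follows because $ p^2 a^2 + (1-p)^2 b^2 + 2p(1-p) ab \cos \alpha \pi \ge (p a - (1-p) b)^2 \sin^2(\alpha \pi/2)\cdot(\dots) $; more simply, writing it as $ |p a + (1-p) b e^{i\alpha \pi}|^2 $ with $ a = (1-t)^{\alpha }, b = t^{\alpha } > 0 $, it is strictly positive since $ e^{i\alpha \pi} \notin \bR_{\le 0} $ for $ 0<\alpha <1 $. Thus writing $ g_{\alpha ,p}(t) = t^{\alpha -1} \cdot (t \cdot \text{num})/\text{den} $ — or more directly $ g_{\alpha ,p}(t) = t^{\beta - 1} u(t)/v(t) $ with $ \beta = \alpha +1 $, $ u = \frac{\sin\alpha\pi}{\pi}(1-p) \in \cP $ (a constant, which is $ t^0 \cdot \text{const} $; note $ \cP $ requires $ \beta > 0 $, so use $ u(t) = t^{\alpha} \cdot \frac{\sin\alpha\pi}{\pi}(1-p)$ and $ \beta = 1 $) — we get $ g_{\alpha ,p} \in \cF $, hence $ I^{\alpha }[g_{\alpha ,p}] $ is $ C^\infty $ on $ (0,1) $, so $ G_{\alpha ,p} \in C^\infty(0,1) $. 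The formula \eqref{G alpha p prime} for $ G_{\alpha ,p}' $ then follows from Lemma \ref{lem: I alpha prime} applied to $ f = g_{\alpha ,p} $, since $ g_{\alpha ,p}(0+) = 0 $ (because of the factor $ t^{\alpha } $) and $ g_{\alpha ,p}' \in L^1(0,1-) $ (it behaves like $ t^{\alpha -1} $ near $ 0 $).

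For $ H_{\gamma } $ the argument is analogous but with one extra differentiation step. From \eqref{int H gamma} we have $ \int_0^x H_{\gamma }(y)\,\d y = \Gamma (\gamma ) I^{\gamma }[h_{\gamma }](x) $. Inspecting \eqref{h gamma}, the same reasoning as above shows $ h_{\gamma } \in \cF $: the numerator $ \frac{\sin \gamma \pi}{\pi}\frac{\gamma }{1-\gamma } t^{\gamma } $ is in $ \cP $, the denominator $ (1-t)^{2\gamma } + t^{2\gamma } - 2 t^{\gamma }(1-t)^{\gamma }\cos\gamma\pi = |(1-t)^{\gamma } - t^{\gamma } e^{i\gamma\pi}|^2 $ is in $ \cP $ and strictly positive on $ (0,1) $ since $ 1/2 < \gamma < 1 $ forces $ \gamma \pi \in (\pi/2, \pi) $ so $ e^{i\gamma\pi} \notin \bR_{>0} $. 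Hence by Lemma \ref{lemma}, $ I^{\gamma }[h_{\gamma }] $ is $ C^\infty $, so $ \int_0^x H_{\gamma }(y)\,\d y $ is $ C^\infty $, and therefore $ H_{\gamma } $ itself is $ C^\infty $ on $ (0,1) $. To get the explicit formula \eqref{H gamma prime}, apply Proposition \ref{prop: key1} to $ f = h_{\gamma } $: since $ h_{\gamma }' \in C^1(0,1) $ and $ \delta(h_{\gamma }') = t h_{\gamma }'' \in L^1(0,1-) $ (as $ h_{\gamma }'' \sim C_2 t^{\gamma -2} $, so $ t h_{\gamma }'' \sim C_2 t^{\gamma -1} $ is integrable), Proposition \ref{prop: key1} gives $ \delta I^{\gamma }[h_{\gamma }'] = I^{\gamma }[\gamma h_{\gamma }' + \delta h_{\gamma }'] = I^{\gamma }[\gamma h_{\gamma }' + t h_{\gamma }''] $, i.e. $ \frac{\d}{\d x} I^{\gamma }[h_{\gamma }'](x) = \frac{1}{x}\int_0^x (x-t)^{\gamma -1}\{\gamma h_{\gamma }'(t) + t h_{\gamma }''(t)\}\,\d t / \Gamma(\gamma) $. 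Combining with \eqref{H gamma} (which expresses $ H_{\gamma } = \Gamma(\gamma) I^{\gamma}[h_{\gamma}'] $ — valid by Lemma \ref{lem: I alpha prime} since $ h_{\gamma }(0+) = 0 $) yields \eqref{H gamma prime} after absorbing the $ \Gamma(\gamma) $ constants; here I should double-check the normalization, since \eqref{H gamma} and \eqref{H gamma prime} as written drop the $ \Gamma $ factors, meaning $ h_\gamma $ should be read as already incorporating $ 1/\Gamma(\gamma) $, or equivalently the $ I^\gamma $ in \eqref{int H gamma}--\eqref{H gamma} is the unnormalized fractional integral $ \int_0^x (x-t)^{\gamma-1}(\cdot)\,\d t $ rather than the $ \Gamma $-normalized $ I^\gamma $ of Section \ref{sec: frac} — Proposition \ref{prop: key1} and Corollary \ref{cor: key1} are insensitive to this overall constant, so the conclusion is unaffected.

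The main obstacle I anticipate is purely bookkeeping rather than conceptual: one must confirm that the specific rational-in-$ t^\alpha $ functions $ g_{\alpha ,p} $ and $ h_\gamma $ genuinely fit the template $ t^{\beta-1} u(t)/v(t) $ with $ u, v \in \cP $ and $ \beta > 0 $, which requires (a) recognizing that each of $ t^\alpha $, $ (1-t)^\alpha $, and their products and sums lies in $ \cP $ — the $ (1-t)^\alpha $ piece is $ t^0 \cdot (1-t)^\alpha $ where $ (1-t)^\alpha $ is $ C^\infty $ with bounded derivatives near $ 0 $, but strictly speaking $ \cP $ requires $ \beta > 0 $, so one writes it as, e.g., a term with $ \beta $ chosen positive or notes that constants and such analytic functions can be folded into the "bounded $ v $" slot — and (b) verifying the denominators are bounded away from zero on compact subsets of $ (0,1) $ and, crucially, are of the form (positive $ \beta $ power)$\,\times\,$(nonvanishing smooth-with-bounded-derivatives function) near $ t = 0 $, so that $ 1/v(t) $ stays in the allowed class; this is where the $ |p(1-t)^\alpha + (1-p)t^\alpha e^{i\alpha\pi}|^2 $ rewriting pays off, since near $ t = 0 $ the denominator tends to $ p^2 > 0 $. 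Once these membership checks are laid out cleanly, the theorem follows immediately by quoting Lemma \ref{lemma}, Lemma \ref{lem: I alpha prime} and Proposition \ref{prop: key1}, with no further estimates needed.
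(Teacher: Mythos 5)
Your proposal is correct and takes essentially the same route as the paper: rewrite $ G_{\alpha ,p} = \Gamma (\alpha ) I^{\alpha }[g_{\alpha ,p}] $ and $ I^1[H_{\gamma }] = \Gamma (\gamma ) I^{\gamma }[h_{\gamma }] $, check $ g_{\alpha ,p}, h_{\gamma } \in \cF $ and invoke Lemma \ref{lemma} for smoothness, then obtain \eqref{G alpha p prime} via Lemma \ref{lem: I alpha prime} and \eqref{H gamma prime} via Proposition \ref{prop: key1}; the membership and positivity bookkeeping you spell out is precisely what the paper compresses into ``we can easily see,'' and your handling of the $ \Gamma $-normalization is right. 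Only a cosmetic slip: in the $ H_{\gamma } $ step you should say you apply Proposition \ref{prop: key1} to $ f = h_{\gamma }' $ (whose hypotheses are the ones you verify), not to $ h_{\gamma } $ itself.
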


\begin{proof}
We may rewrite 
\eqref{G alpha p} and \eqref{int H gamma} as 
$ G_{\alpha ,p} = \Gamma (\alpha ) I^{\alpha }[g_{\alpha ,p}] $ 
and $ I^1[H_{\gamma }]=\Gamma (\gamma ) I^{\gamma }[h_{\gamma }] $, respectively. 
We can easily see that $ g_{\alpha ,p}, h_{\gamma } \in \cF $, 
and therefore we obtain $ G_{\alpha ,p}, H_{\gamma } \in C^n(0,1) $ for all $ n \ge 1 $ 
by Lemma \ref{lemma}. 

Using Lemma \ref{lem: I alpha prime}, 
we obtain the formulae 
\begin{align}
G_{\alpha ,p}' = \Gamma (\alpha ) I^{\alpha }[g_{\alpha ,p}'] 
\label{G alpha p prime 2}
\end{align}
and 
\begin{align}
H_{\gamma }=\Gamma (\gamma ) I^{\gamma }[h_{\gamma }'] . 
\label{}
\end{align}
Using Proposition \ref{prop: key1}, 
we obtain the formula 
\begin{align}
H_{\gamma }'(x) 
= \frac{1}{x} \Gamma (\gamma ) I^{\gamma }[\gamma h_{\gamma }' + \delta h_{\gamma }'](x) . 
\label{H gamma prime 2}
\end{align}
Now the proof is completed. 
\end{proof}

\section{Asymptotic behaviors of Riemann--Liouville fractional integrals} \label{sec: asymp}

In this section, we study an asymptotic property for a function 
expressed by the Riemann--Liouville fractional integral. 
For this purpose, we use Karamata's theory of regular variations; 
see, e.g., \cite{BGT} for the details. 

\begin{Prop} \label{prop: key2}
Let $ \alpha >0 $ and $ f \in L^1(0,1-) \cap C(0,1) $. 
Suppose that 
\begin{align}
f(x) \sim x^{\beta -1} K(x) 
\qquad \text{as} \ x \to 0+ 
\label{ass reg var}
\end{align}
for some $ \beta>0 $ and some slowly varying function $ K(x) $ at $ x=0 $. 
Then 
\begin{align}
I^{\alpha }[f](x) \sim \frac{\Gamma (\beta )}{\Gamma (\alpha + \beta )} 
x^{\alpha + \beta -1} K(x) 
\qquad \text{as} \ x \to 0+ . 
\label{}
\end{align}
\end{Prop}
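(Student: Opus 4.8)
The plan is to reduce the statement to the exact computation for a pure power and then control the correction by Karamata's theory. First I would make the scaling substitution $ t = xs $ in the defining integral to write
\begin{align}
I^{\alpha }[f](x) = \frac{x^{\alpha }}{\Gamma (\alpha )} \int_0^1 (1-s)^{\alpha -1} f(xs) \, \d s ,
\qquad 0<x<1 ,
\label{}
\end{align}
which is legitimate because $ f \in L^1(0,1-) \cap C(0,1) $ makes the integrand absolutely integrable for each fixed $ x $. Dividing by $ x^{\alpha +\beta -1} K(x) $ and inserting a compensating factor $ s^{\beta -1} $, the quotient $ I^{\alpha }[f](x) / \kbra{ x^{\alpha +\beta -1} K(x) } $ equals
\begin{align}
\frac{1}{\Gamma (\alpha )} \int_0^1 (1-s)^{\alpha -1} s^{\beta -1}
\cdot \frac{f(xs)}{(xs)^{\beta -1} K(xs)} \cdot \frac{K(xs)}{K(x)} \, \d s .
\label{}
\end{align}
Since the Beta integral gives $ \frac{1}{\Gamma (\alpha )} \int_0^1 (1-s)^{\alpha -1} s^{\beta -1} \d s = \frac{\Gamma (\beta )}{\Gamma (\alpha +\beta )} $---which already settles the case $ f(t) = t^{\beta -1} $ and identifies the constant---the whole problem reduces to justifying the passage to the limit $ x \to 0+ $ inside this integral.

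For the pointwise limit of the integrand I would argue as follows: for each fixed $ s \in (0,1) $ we have $ xs \to 0 $, so $ f(xs)/\kbra{ (xs)^{\beta -1} K(xs) } \to 1 $ by the hypothesis \eqref{ass reg var}, while $ K(xs)/K(x) \to 1 $ is exactly the defining property of slow variation of $ K $ at the origin; hence the integrand converges to $ (1-s)^{\alpha -1} s^{\beta -1} $ for a.e. $ s \in (0,1) $.

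The remaining step---which I expect to be the main obstacle---is to dominate the integrand uniformly in $ x $ by an integrable function on $ (0,1) $. The asymptotic \eqref{ass reg var} provides a threshold $ t_0 $ with $ |f(t)| \le 2 t^{\beta -1} K(t) $ for $ 0<t \le t_0 $, and since $ xs \le x $ this bounds the middle factor whenever $ x \le t_0 $. For the ratio $ K(xs)/K(x) $ I would invoke Potter's bound (the uniform convergence theorem for slowly varying functions; see \cite{BGT}): having fixed $ \eps $ with $ 0<\eps <\beta $, there is $ x_0 \le t_0 $ such that $ K(xs)/K(x) \le 2 s^{-\eps} $ for all $ s \in (0,1] $ and all $ x \le x_0 $. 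Then for $ x \le x_0 $ the integrand is dominated by $ 4 (1-s)^{\alpha -1} s^{\beta -1-\eps} $, which is integrable on $ (0,1) $ precisely because $ \alpha >0 $ and $ \beta -\eps >0 $. Dominated convergence then yields $ I^{\alpha }[f](x)/\kbra{ x^{\alpha +\beta -1} K(x) } \to \Gamma (\beta )/\Gamma (\alpha +\beta ) $, which is the assertion. The delicate point is the behaviour near $ s = 0 $, where $ s^{\beta -1} $ is small but the slowly varying ratio $ K(xs)/K(x) $ may blow up as $ s \to 0 $; the choice $ \eps <\beta $ is exactly what keeps the majorant integrable there, whereas near $ s=1 $ the only singularity is the harmless integrable factor $ (1-s)^{\alpha -1} $.
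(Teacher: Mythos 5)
Your proof is correct and follows essentially the same route as the paper: the scaling substitution $t=xs$, reduction to the Beta integral $\frac{1}{\Gamma(\alpha)}\int_0^1(1-s)^{\alpha-1}s^{\beta-1}\,\d s=\Gamma(\beta)/\Gamma(\alpha+\beta)$, and passage to the limit as $x\to 0+$. The only difference is that you spell out, via Potter's bound for the slowly varying factor and dominated convergence, the justification of the limit interchange which the paper dismisses as ``obvious''; your constant is also the correct one, matching the statement (the displayed limit in the paper's proof has the reciprocal by an evident typo).
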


\begin{proof}
Since $ f \in C(0,1) $ and $ f $ satisfies the assumption \eqref{ass reg var}, 
the integral 
\begin{align}
I^{\alpha }[f](x) = \int_0^x \frac{(x-t)^{\alpha -1}}{\Gamma (\alpha )} f(t) \d t 
\label{}
\end{align}
makes sense in the sense of the Riemann integral. 
Changing variables to $ s=t/x $, we have 
\begin{align}
I^{\alpha }[f](x) = x^{\alpha } \int_0^1 \frac{(1-s)^{\alpha -1}}{\Gamma (\alpha )} f(xs) \d s . 
\label{}
\end{align}
By the assumption \eqref{ass reg var}, 
it is obvious that 
\begin{align}
\frac{I^{\alpha }[f](x)}{x^{\alpha + \beta -1} K(x)} 
\to \frac{1}{\Gamma (\alpha )} \int_0^1 (1-s)^{\alpha -1} s^{\beta -1} \d s 
= \frac{\Gamma (\alpha + \beta )}{\Gamma (\beta )} 
\label{}
\end{align}
as $ x \to 0+ $. This completes the proof. 
\end{proof}

\begin{proof}[Proof of Theorems \ref{thm1} and \ref{thm2}]
It is easy to see by \eqref{g alpha p} that 
\begin{align}
g_{\alpha ,p}'(t) \sim 
\frac{\sin \alpha \pi}{\pi} \cdot \frac{1-p}{p^2} \cdot \alpha t^{\alpha } 
\qquad \text{as} \ t \to 0+ 
\label{}
\end{align}
and by \eqref{h gamma} that 
\begin{align}
\gamma h_{\gamma }'(t) + \delta h_{\gamma }'(t) 
\sim \frac{\sin \gamma \pi}{\pi} \cdot \frac{\gamma ^2 (2\gamma -1)}{1-\gamma } 
\cdot t^{\gamma -1} 
\qquad \text{as} \ t \to 0+ . 
\label{}
\end{align}
Now we apply Proposition \ref{prop: key2} 
to \eqref{G alpha p prime 2} and \eqref{H gamma prime 2}, 
we obtain the desired results. 
\end{proof}

{\bf Acknowledgements:} 
The present study started 
when the first author had a short stay at University of California, San Diego 
in the winter of 2006. 
He expresses his sincere thanks to Professor Patrick J. Fitzsimmons 
for the fruitful discussions and his hospitality during that stay. 
Both of the authors would like to express their hearty gratitude 
to Professor Marc Yor 
for the stimulating discussions and his hospitality 
during their short stay in Universit\'e Paris VI in May 2007.

\end{document}